\newcommand{\RR}{{{\rm I} \kern -.15em {\rm R} }}
\begin{document}
	\theoremstyle{plain} \newtheorem{thm}{Theorem}[section] \newtheorem{cor}[thm]{Corollary} \newtheorem{lem}[thm]{Lemma} \newtheorem{prop}[thm]{Proposition} \theoremstyle{definition} \newtheorem{defn}{Definition}[section] \theoremstyle{remark} \newtheorem{oss}{Remark}[section]
	\newtheorem{ex}{Example}[section]
	\newtheorem{lemma}{Lemma}[section]
	\title{Semiconcavity for the minimum time problem \\in presence of time delay effects}
	\author{Elisa Continelli\footnote{Email: elisa.continelli@graduate.univaq.it}\hspace{0.3cm}\&\hspace{0.2cm}Cristina Pignotti\footnote{Email: cristina.pignotti@univaq.it} \\Dipartimento di Ingegneria e Scienze dell'Informazione e Matematica\\
		Universit\`{a} degli Studi di L'Aquila\\
		Via Vetoio, Loc. Coppito, 67100 L'Aquila Italy}

	\maketitle
	\begin{abstract}
		In this paper, we deal with a minimum time problem in presence of a time delay $\tau.$ The value function of the optimal control problem is no longer defined in a subset of $\mathbb{R}^{n}$, as it happens in the undelayed case, but its domain is a subset of the Banach space $C([-\tau,0];\mathbb{R}^{n})$. For the undelayed minimum time problem, it is known that the value function associated with it is semiconcave in a subset of the reachable set and is a viscosity solution of a suitable Hamilton-Jacobi-Bellman equation. The Hamilton-Jacobi theory for optimal control problems involving time delays has been developed by several authors. Here, we are rather interested in investigating the regularity properties of the minimum time functional. Extending classical arguments, we are able to prove that the minimum time functional is semiconcave in a suitable subset of the reachable set. 
	\end{abstract}
		\providecommand{\keywords}[1]{\textbf{Keywords:} #1}
	\keywords{time-optimal control; minimum time function; semiconcavity; time delay; dynamic programming; Hamilton-Jacobi equations.}
	\providecommand{\mathsub}[1]{\textbf{2020 Mathematics Subject Classification:} #1}
	\\\\\mathsub{ 34K05, 49K15, 49L20}	
	\vspace{5 mm}

	\section{Introduction}	
	Consider the nonlinear system
	\begin{equation}\label{nodel}
		\begin{cases}
			y'(t)=f(y(t),u(t)),\quad &\,  t\geq 0,\\y(0)=x\in \mathbb{R}^{n},
		\end{cases}
	\end{equation}
where $U$ is a compact subset of $\mathbb{R}^{m}$ and $f:\mathbb{R}^{n}\times U\rightarrow\mathbb{R}^{n}$ is a given function. A measurable function $u:[0,+\infty)\rightarrow U$ is called a {\em control} and the solution to the state equation \eqref{nodel} with the initial datum $x$ corresponding to the control $u$ is denoted by $y(\cdot;x,u)$. Given a nonempty closed set $\mathcal{K}\subseteq\mathbb{R}^{n}$, called the {\em target}, the {\em reachable set} $\mathcal{R}$ is the set of all points $x\in \mathbb{R}^{n}$ that can be steered to $\mathcal{K}$ in finite time. The {\em minimum time problem} for the system \eqref{nodel} is the following:
\begin{equation}\label{mtpnodel}
	\text{minimize } \theta(x,u), \text{ over all controls } u,
\end{equation}
where $ \theta(x,u)$ is the first time at which the trajectory starting at $x\in \mathbb{R}^n$ with control $u$ reaches the target $\mathcal K,$ i.e.

$$
 \theta(x,u):=\min\{t\geq 0:y(t,x,u)\in \mathcal{K}\}.$$
The value function associated to the optimal control problem \eqref{mtpnodel} is called the {\em minimum time function} and it is the function $T:\mathcal{R}\rightarrow\mathbb{R}$ defined as
\begin{equation}\label{mtfnodel}
	T(x):=\inf_{u} \theta(x,u),\quad \forall x\in \mathcal{R}.
\end{equation}
\\The minimum time problem \eqref{mtpnodel} is a classical topic in control theory that has caught the attention of many researchers in the past decades. In particular, the regularity properties of the minimum time function $T$ given by \eqref{mtfnodel} have been largely investigated. It is known that, under a controllability assumption known as the Petrov condition (see \cite{Petrov1, Petrov2}), the minimum time function is locally Lipschitz continuous in the reachable set. However, there are simple examples showing that the minimum time function fails to be differentiable everywhere. In \cite{Cansin}, it has been proved that, under suitable assumptions, the minimum time function is semiconcave in $\mathcal{R}\setminus \mathcal{K}.$ The semiconcavity result has been then  extended to general exit time problems in \cite{CPS}.  Moreover, it has been proved in \cite{Bardi} that the minimum time function is the unique viscosity solution of a suitable Hamilton-Jacobi-Bellman equation. Semiconcavity is a very useful property to get uniqueness results for weak solutions of Hamilton-Jacobi equations. Also, the semiconcavity property allows deriving some optimality conditions (see e.g. \cite{CFS} for the minimum time function, \cite{Luong, pignotti} for exit time problems). For more details on semiconcave functions, Hamilton-Jacobi theory and viscosity solutions we refer to \cite{Cannarsa_libro, Capuzzo, CL,CEL}. For other results regarding semiconcavity of different types of equations see for instance \cite{Albano, ABC, ACS, Bonnet, Han}.

In this paper, we deal with the minimum time problems for the delayed control system
\begin{equation}\label{delcs}
	\begin{cases}
		y'(t)=f(y(t-\tau),u(t)),\quad &\,  t\geq 0,\\y(s)=x(s),\quad &s\in [-\tau,0],
	\end{cases}
\end{equation}
where $x\in C([-\tau,0];\mathbb{R}^{n})$, being $C([-\tau,0];\mathbb{R}^{n})$ the Banach space of all continuous functions defined in $[-\tau,0]$ with values in $\mathbb{R}^{n}$. For a complete understanding of differential equations of the type \eqref{delcs}, we refer the reader to \cite{Halanay, HL}. Due to the fact that the initial data are functions whose domain is the time interval $[-\tau,0]$, the minimum time function associated to the minimum time problem for system \eqref{delcs} is no longer defined in a subset of $\mathbb{R}^{n}$. Indeed, the reachable set $\mathcal{R}$ is now a subset of the inifinite dimensional space $C([-\tau,0];\mathbb{R}^{n})$. 

The Hamilton-Jacobi theory for optimal control problems involving time delays has been developed by several authors (\cite{Barron, Boccia, Bonalli, Luk1, Luk2, Luk3, Plaksin1, Plaksin2, Soner, Vinter, Zhu}). In all of these works the optimal control problems under consideration are with finite time horizon. Also, in \cite{Luk1, Luk2, Luk3, Plaksin1, Plaksin2, Zhu}, optimal control problems associated to control systems with dynamics depending not just on a past instant but on a whole interval of the past history of the trajectory (in the literature one refers to such systems as hereditary systems) are investigated and a notion of differentiability for functionals belonging to the Banach space $C([-\tau,0];\mathbb{R}^{n})$, the so-called coinvariant (or ci)-differentiability, is exploited. In \cite{Barron, Boccia, Vinter}, optimality conditions, with particular attention to the Pontryagin maximum principle, are given for optimal control problems in presence of time delays. We also mention \cite{Crandall1, Crandall2, Crandall3} for a general study of Hamilton-Jacobi equations in infinite dimension and \cite{Ishii} for the analysis of a class of Hamilton-Jacobi equations in Hilbert spaces.

Motivated by this, we analyze the regularity properties of the minimum time function associated with the delayed control system \eqref{delcs}. Extending the arguments of \cite{Cansin}, we are able to prove that, also in this functional setting, the Petrov controllability condition implies the local Lipschitz continuity of the minimum time function. Furthermore, we show that the minimum time function is semiconcave in a suitable subset of the reachable set. {Both the local Lipschitz continuity and the semiconcavity of the minimum time function will be proven assuming a smallness condition on the time delay size.}

{Let us note that, with respect to the  arguments for the undelayed case (cf. \cite{Cansin}), since the minimum time is a functional mapping functions $x\in C([-\tau,0];\mathbb{R}^{n})$ into $[0,+\infty],$ additional properties have to be taken into account and a more delicate analysis is required.  In particular, dealing with the system's trajectories requires distinguishing different time ranges, the interval $[0,\tau]$ for which the delayed time belongs to the negative interval $[-\tau, 0],$ and so the delayed state concerns the initial datum, and the successive times, $t>\tau.$ Consequently, different situations have to be considered and new properties, with respect to the case without any time delays, are involved, such as the Lipschitz continuity of the initial data and the smallness assumption on the time delay $\tau.$ }

{The paper is organized as follows. In Sect. \ref{Prel} we present some preliminary definitions and notations and we introduce rigorously the minimum time problem for systems involving time delays. In Sect. 3, we show that the minimum time function is locally Lipschitz continuous in suitable subsets of the reachable set, namely subsets of the reachable set consisting of Lipschitz continuous initial data, provided that the time delay $\tau$ satisfies a suitable smallness condition. In Sect. 4, again under a smallness condition on the time delay size, we prove our main result, which ensures that the minimum time function is semiconcave in a suitable subset of $\mathcal{R}_{M}$, where $\mathcal{R}_{M}$ is the set of all functions $x$ in the reachable set $\mathcal{R}$ that are Lipschitz continuous of constant $M$, being $M$ the constant of boundedness of the dynamics $f$. Finally, in Sect. 5 we make some conclusions and comments about the analysis carried out throughout this paper.}
	\section{Preliminaries}\label{Prel}

	Let $\lvert\cdot\rvert$ be the usual norm on $\mathbb{R}^{n}$. Given a nonempty closed set $\mathcal{K}\subseteq\mathbb{R}^{n}$, we denote with $d_{\mathcal{K}}(\cdot)$ the distance function from $\mathcal{K}$, namely

	$$d_{\mathcal{K}}(z)=\inf_{y\in \mathcal{K}}\lvert y-z\rvert,\quad \forall z\in \mathbb{R}^{n}.$$
Moreover, for any $\rho>0$, we set

 $$\mathcal{K}_{\rho}=\{z\in\mathbb{R}^{n}:d_{\mathcal{K}}(z)<\rho \}.$$  
	We shall denote with $C([-\tau,0];\mathbb{R}^{n})$ the Banach space of all continuous functions defined in  $[-\tau,0]$ with values in $\mathbb{R}^{n}$. We endow $C([-\tau,0];\mathbb{R}^{n})$ with the uniform norm

	$$\lVert x\rVert_{\infty}=\sup_{s\in[-\tau,0]}\lvert x(s)\rvert.$$
	We recall the definition of a semiconcave function.
	\begin{defn}
		A continuous function $v:\Omega\rightarrow\mathbb{R}$, with $\Omega\subseteq \mathbb{R}^{n}$, is called \textit{semiconcave} if, for any convex set $K\subset\subset \Omega$, there exists $c_{K}>0$ such that

		$$v(x+h)+v(x-h)-2v(x)\leq c_{k}\lvert h\rvert^{2},$$
		for any $x,h$ such that $x,x+h,x-h\in K$.
	\end{defn}
	For the analysis we carry out in this paper, we need to extend the above definition to functionals whose domain is a subset of the Banach space $C([-\tau,0];\mathbb{R}^{n})$.
	\begin{defn}
		A continuous functional $v:S\rightarrow\mathbb{R}$, with $S\subseteq C([-\tau,0];\mathbb{R}^{n})$, is called \textit{semiconcave} if, for any convex set $K\subset\subset S$, there exists $c_{K}>0$ such that

		$$v(x+h)+v(x-h)-2v(x)\leq c_{k}\lVert h\rVert_{\infty}^{2},$$
		for any $x,h$ such that $x,x+h,x-h\in K$.
	\end{defn}
	\subsection{The minimum time problem with time delay}

Let $\mathcal{K}$ be a nonempty closed subset of $\mathbb{R}^{n}$ and $U$ be a subset of $\mathbb{R}^{m}$. Let $f:\mathbb{R}^{n}\times U\rightarrow\mathbb{R}^{n}$ be a given function. 
\\Consider the delayed control system
\begin{equation}\label{CS}
\begin{cases}
		y'(t)=f(y(t-\tau),u(t)),\quad &\,  t\geq 0,\\y(s)=x(s),\quad &s\in [-\tau,0],
	\end{cases}
\end{equation}
	where $x\in C([-\tau,0];\mathbb{R}^{n})$. 
{We emphasize that, due to the presence of the time delay, the initial datum is here a function; then the state equation \eqref{CS} is now a functional differential equation. Therefore, the analysis, even if based on the classical theory, requires more careful and finer arguments.}

In this paper, the set $\mathcal{K}$ plays the role of the \textit{target} to which one wants to steer the trajectories of the nonlinear control system \eqref{CS}. We assume that the following conditions hold.
	\begin{itemize}
		\item[$(H_{1})$] $f:\mathbb{R}^{n}\times U\rightarrow\mathbb{R}^{n}$ is continuous and satisfies
		$$\lvert f(z,u)\rvert\leq M,\quad \forall z\in \mathbb{R}^{n}, u\in U, $$$$\lvert f(z,u)-f(y,u)\rvert\leq L \lvert z-y\rvert,\quad \forall z,y\in \mathbb{R}^{n}, u\in U.$$
		\item[$(H_{2})$] $U\subseteq \mathbb{R}^{m}$, $\mathcal{K}\subseteq \mathbb{R}^{n}$ are compact sets.
		{\item[$(H_{3})$] The gradient of $f$ with respect to the space variable $x$ exists everywhere and is locally Lipschitz continuous in $x$, uniformly in $u$. }
	\end{itemize}
A control for equation \eqref{CS} is a measurable function $u:[0,\infty)\rightarrow U$.

Given a control $u:[0,\infty)\rightarrow U$ and an initial datum $x\in C([-\tau,0];\mathbb{R}^{n})$, adopting a step-by-step procedure and using the assumption $(H_{1})$, we get that there exists a unique solution to the control system \eqref{CS} associated to the control $u$ for the initial condition $x$, denoted by $y(\cdot;x,u)$. For further details about the existence and uniqueness of functional differential equations, we refer to \cite{Halanay}, \cite{HL}.

Given a solution $y(\cdot;x,u)$ of the control system \eqref{CS}, for every $t\geq 0$, let $y_{t}(\cdot;x,u)$ be the function in $ C([-\tau,0];\mathbb{R}^{n})$ given by

$$y_{t}(s;x,u)=y(t+s;x,u),\quad \forall s \in [-\tau,0].$$
For any initial datum $x\in C([-\tau,0];\mathbb{R}^{n})$ and for any control $u$, we set 

$$\theta(x,u)=\min\{t\geq0:y(t;x,u)\in \mathcal{K} \},$$
i.e. $\theta(x,u)\in [0,+\infty]$ is the first time at which the trajectory $y(\cdot;x,u)$ reaches the target $\mathcal{K}$.
Moreover, we define the \textit{reachable set} $\mathcal{R}$ as the set of all initial data starting from which the target can be reached in a finite time, namely 

$$\mathcal{R}=\{x\in C([-\tau,0];\mathbb{R}^{n}):\theta(x,u)<+\infty, \,\text{for some control}\, u \}.$$
The optimal control problem we deal with, called the \textit{minimum time problem}, is the following: 
\begin{equation}\label{MTP}
\quad \text{minimize } \theta(x,u), \text{ over all controls } u.
\end{equation}
The value function associated to \eqref{MTP} is called \textit{minimum time function}.
{Since the reachable set is a subset of the Banach space 
$ C([-\tau,0];\mathbb{R}^{n}),$ the minimum time function is no longer a function defined  on a subset of  $\mathbb{R}^{n}$, as in the undelayed case, but a functional,} 
$T:\mathcal{R}\rightarrow [0,+\infty),$ defined as

$$T(x)=\inf_{u}\theta(x,u),\quad \forall x\in\mathcal{R}.$$
Our interest is in investigating the regularity properties of the minimum time functional over a certain subset of the reachable set. For this purpose, we will assume the following controllability condition of Petrov type:
\begin{itemize}
	\item[$(H_{4})$] There exist $\mu,\sigma>0$ such that, for any $x\in\mathbb{R}^{n}$ with $x\in \mathcal{K}_{\sigma}\setminus\mathcal{K}$, there exists $u_{x}\in U$ such that
	$$f(x,u_{x})\cdot\frac{x-\pi(x)}{\lvert x-\pi(x)\rvert}\leq -\mu,$$
	for some $\pi(x)\in \mathcal{K}$ such that $d_{\mathcal{K}}(x)=\lvert x-\pi(x)\rvert.$
\end{itemize}	
\section{Local Lipschitz continuity of the value function}
Now, we investigate the Lipschitz continuity of the minimum time functional. We first present a version of the dynamic programming principle that is suitable for our functional setting.
\begin{prop}[Dynamic programming principle]
	\label{din}
	Let $x\in \mathcal{R}$. Then, for every control $u$ such that $\theta(x,u)<+\infty$, it holds
	\begin{equation}\label{dpp}
		T(x)\leq t+T(y_{t}(\cdot;x,u)), \quad\forall {t\in [0,\theta(x,u)]}.
	\end{equation}
\end{prop}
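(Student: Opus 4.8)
The plan is to establish \eqref{dpp} by a concatenation-of-controls argument together with the uniqueness of solutions to the delayed system \eqref{CS}. Fix $x\in\mathcal{R}$, $t\in[0,T(x)]$, and a control $u$ with $\theta(x,u)<+\infty$, and set $z:=y_{t}(\cdot;x,u)\in C([-\tau,0];\mathbb{R}^{n})$. Since $t\leq T(x)\leq\theta(x,u)$, the shifted control $s\mapsto u(t+s)$ steers the datum $z$ to $\mathcal{K}$ within time $\theta(x,u)-t$, so $z\in\mathcal{R}$ and the right-hand side of \eqref{dpp} is finite and well defined; this preliminary remark is what guarantees $T(y_{t}(\cdot;x,u))$ makes sense.

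Next I would fix an arbitrary control $v$ with $\theta(z,v)<+\infty$ and define the concatenated control $w$ on $[0,+\infty)$ by $w(s)=u(s)$ for $s\in[0,t]$ and $w(s)=v(s-t)$ for $s>t$; this $w$ is measurable with values in $U$. The heart of the proof is the claim that $y(t+s;x,w)=y(s;z,v)$ for every $s\geq-\tau$. Since $w$ coincides with $u$ on $[0,t]$, the step-by-step uniqueness provided by $(H_{1})$ gives $y(\cdot;x,w)=y(\cdot;x,u)$ on $[-\tau,t]$; hence the function $\phi(s):=y(t+s;x,w)$ equals $z$ on $[-\tau,0]$, and for a.e.\ $s\geq0$ one has $\phi'(s)=f(y(t+s-\tau;x,w),w(t+s))=f(\phi(s-\tau),v(s))$, so that $\phi$ solves the initial value problem \eqref{CS} with datum $z$ and control $v$. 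Uniqueness of the solution then forces $\phi\equiv y(\cdot;z,v)$ on $[-\tau,+\infty)$, proving the claim.

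Finally, because $y(\theta(z,v);z,v)\in\mathcal{K}$, the claim yields $y(t+\theta(z,v);x,w)\in\mathcal{K}$, whence $\theta(x,w)\leq t+\theta(z,v)$ and therefore $T(x)\leq\theta(x,w)\leq t+\theta(z,v)$. Taking the infimum over all admissible controls $v$ gives $T(x)\leq t+T(z)=t+T(y_{t}(\cdot;x,u))$, which is \eqref{dpp}. I expect the only delicate point to be the bookkeeping in the concatenation step: one must verify that the history segment $\phi|_{[-\tau,0]}$ of the time-shifted trajectory really is the datum $z$ (splitting into the cases $t+s\geq0$ and $t+s<0$ if $t<\tau$) and that the delayed ODE for $\phi$ holds almost everywhere — both of which reduce to the existence and uniqueness theory for \eqref{CS} recalled from \cite{Halanay, HL}. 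Everything else is routine.
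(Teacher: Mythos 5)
Your argument is correct, and it takes a somewhat different route from the paper's. The paper proves \eqref{dpp} by contradiction: assuming $T(x)>\tilde t+T(y_{\tilde t}(\cdot;x,\tilde u))$, it concatenates $\tilde u$ on $[0,\tilde t]$ with an \emph{optimal} control for the shifted datum $y_{\tilde t}(\cdot;x,\tilde u)$ and reads off a contradiction with the definition of $T(x)$; this implicitly requires that an optimal control for the shifted datum exists (an assumption the authors only make explicit later, before the proof of Theorem \ref{semiconcavitythm}), and it leaves the semigroup identity for the concatenated trajectory unverified. You instead argue directly: you concatenate $u$ on $[0,t]$ with an \emph{arbitrary} control $v$ for $z=y_t(\cdot;x,u)$, prove the shift identity $y(t+s;x,w)=y(s;z,v)$ via the step-by-step uniqueness granted by $(H_1)$, deduce $T(x)\leq t+\theta(z,v)$, and then take the infimum over $v$. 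What your approach buys is that it does not need existence of optimal controls and it makes the concatenation/uniqueness bookkeeping explicit (including the preliminary check that $z\in\mathcal{R}$, so that $T(z)$ is well defined); what the paper's contradiction argument buys is brevity, at the cost of these suppressed details. One cosmetic remark: your phrase ``for a.e.\ $s\geq 0$'' is the right level of care, since solutions of \eqref{CS} are absolutely continuous and the equation holds almost everywhere, or equivalently one can run your uniqueness claim entirely on the integral formulation; either way the step is sound.
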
	
\begin{proof}
	{By contradiction, we assume that there exist a control $\tilde{u}$ satisfying $\theta(x,\tilde{u})<+\infty$ and  a time $\tilde{t}\in [0,\theta(x,\tilde u)]$ such that }

$${T(x)>\tilde{t}+T(y_{\tilde{t}}(\cdot;x,\tilde{u})).}$$
	{Then, there exists $\epsilon>0$ small enough to have }
	$${T(x) \geq \tilde{t}+T(y_{\tilde{t}}(\cdot;x,\tilde{u}))+\epsilon.}$$
	{Now, we define the control $\bar{u}:[0,+\infty)\rightarrow U$ given by}
	
$${\bar{u}(t)=\begin{cases}
		\tilde{u}(t),\quad t\in [0,\tilde{t}],\\u(t),\quad t>\tilde{t},
	\end{cases}}$$
	{where $u$ is a control such that $T(y_{\tilde{t}}(\cdot;x,\tilde{u}))+\epsilon>\theta(y_{\tilde{t}}(\cdot;x,\tilde{u}),u)$. Then, by definition of $\bar{u}$ we get}
	
$${\theta(x,\bar{u})=\tilde{t}+\theta(y_{\tilde{t}}(\cdot;x,\tilde{u}),u)<\tilde{t}+\epsilon+ T(y_{\tilde{t}}(\cdot;x,\tilde{u}))\leq T(x),}$$
{which contradicts the optimality of $T(x)$. Thus, \eqref{dpp} is valid.}
\end{proof}
For our purpose, given $\bar{M}>0$, we set 

$$\text{Lip}_{\bar{M}}([-\tau,0];\mathbb{R}^{n})=\{x\in C([-\tau,0];\mathbb{R}^{n}):\lvert x(s)-x(t)\rvert\leq \bar{M} \lvert s-t\rvert,\,\forall s,t\in [-\tau,0]\},$$
$$\mathcal{R}_{\bar{M}}=\{x\in \mathcal{R}:x\in \text{Lip}_{\bar{M}}([-\tau,0];\mathbb{R}^{n})\}.$$
{Our goal is to prove that the minimum time functional is locally Lipschitz continuous in suitable subsets of the reachable set consisting of Lipschitz continuous initial data. }To this aim, we need the following result ensuring that the Petrov condition $(H_{4})$ implies an estimate of the minimum time functional in terms of the distance from the target.
\begin{prop}\label{stima_distanza}
 Assume  $(H_{1})$, $(H_{2})$, $(H_{4})$ hold
and let $\bar M>0$. Let us denote $M^*:=\max\{M,\bar{M}\}$ and
	assume that $\displaystyle{\tau<\frac{\mu}{2 M^*L}}$. Then, there exist $\delta,C>0$ such that
	\begin{equation}\label{estimate}
		T(x)\leq C d_{\mathcal{K}}(x(0)), 
	\end{equation}
	for every {$x\in \text{Lip}_{M^*}([-\tau,0];\mathbb{R}^{n})$ with $x(0)\in \mathcal{K}_{\delta}$}.
\end{prop}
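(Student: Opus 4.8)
The plan is to construct, for any $x\in\mathcal R_{\bar M}$ with $x(0)\in\mathcal K_\delta$, an admissible control $u$ driving the trajectory to $\mathcal K$ within time $\tfrac2\mu\,d_{\mathcal K}(x(0))$; this yields \eqref{estimate} with $C=\tfrac2\mu$ and $\delta=\sigma$. We may assume $x(0)\notin\mathcal K$, since otherwise $T(x)=0$. First I would record the regularity fact that every trajectory $y=y(\cdot;x,u)$ is $\bar M$-Lipschitz on $[-\tau,+\infty)$: on $[-\tau,0]$ this is the assumption $x\in\mathrm{Lip}_{\bar M}([-\tau,0];\mathbb R^n)$, while for $t\ge0$ one has $|y'(t)|=|f(y(t-\tau),u(t))|\le M\le\bar M$ by $(H_1)$. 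In particular $|y(t)-y(t-\tau)|\le\bar M\tau$ for every $t\ge0$, and this is the only point at which the delay enters quantitatively.

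I would then define the control by \emph{aiming at the target}: as long as $y(t)\in\mathcal K_\sigma\setminus\mathcal K$, choose $u(t)=u_{y(t)}$, the control value given by the Petrov condition $(H_4)$ at the current state $y(t)$, associated with a projection $\pi(y(t))\in\mathcal K$, so that, writing $\nu(t):=\frac{y(t)-\pi(y(t))}{|y(t)-\pi(y(t))|}$, one has $f(y(t),u(t))\cdot\nu(t)\le-\mu$. As in the undelayed case treated in \cite{Cansin}, turning this prescription into a genuine measurable control requires a measurable selection, and the implicit coupling ``$y$ depends on $u$ which depends on $y$'' is resolved by a step-by-step differential-inclusion argument; the delay is harmless here because on each interval $[k\tau,(k+1)\tau]$ the delayed state $y(t-\tau)$ is already determined by the previous steps of the construction.

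The core of the proof is a differential inequality for $g(t):=d_{\mathcal K}(y(t))$. Since $\delta=\sigma$ we have $g(0)=d_{\mathcal K}(x(0))<\sigma$, and I claim $g'(t)\le-\tfrac\mu2$ for a.e.\ $t$ with $g(t)>0$; as $g$ is Lipschitz, this forces $g$ to stay below $\sigma$ (so that $(H_4)$ keeps applying) and to vanish in finite time. To obtain the inequality, fix a time $t$ at which $g$ and $y$ are differentiable and $y(t)\notin\mathcal K$; from $g(t+h)\le|y(t+h)-\pi(y(t))|$ and $g(t)=|y(t)-\pi(y(t))|$, dividing by $h>0$ and letting $h\to0^+$ gives
$$g'(t)\le\nu(t)\cdot y'(t)=\nu(t)\cdot f(y(t-\tau),u(t)).$$
Adding and subtracting $f(y(t),u(t))$ and using $(H_1)$ together with $|y(t)-y(t-\tau)|\le\bar M\tau$,
$$g'(t)\le\nu(t)\cdot f(y(t),u(t))+L\,|y(t-\tau)-y(t)|\le-\mu+L\bar M\tau<-\frac\mu2,$$
the last step being exactly the hypothesis $\tau<\frac{\mu}{2\bar ML}$.

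Integrating, $g(t)\le d_{\mathcal K}(x(0))-\tfrac\mu2\,t$ for as long as $y$ has not yet met $\mathcal K$, so $g$ must vanish at some $t^{\star}\le\tfrac2\mu\,d_{\mathcal K}(x(0))$; hence $y(t^{\star};x,u)\in\mathcal K$ and $T(x)\le\theta(x,u)\le\tfrac2\mu\,d_{\mathcal K}(x(0))$. The main obstacle is the rigorous construction of the aiming control — the measurable selection and the resolution of the implicit $u$--$y$ dependence — together with the careful handling of the nonsmoothness of $d_{\mathcal K}$ along $y$; once these are in place, the delay contributes only the extra term $L\bar M\tau$, which the smallness assumption on $\tau$ absorbs into half of the Petrov gap $\mu$.
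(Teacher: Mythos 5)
Your quantitative mechanism is the right one, and it is the same one the paper exploits: the delay enters only through $\lvert y(t)-y(t-\tau)\rvert\le \bar M\tau$, and the smallness assumption $\tau<\frac{\mu}{2\bar ML}$ absorbs the resulting error $L\bar M\tau$ into half of the Petrov margin $\mu$. The genuine gap is the construction of the control. Your argument needs the \emph{feedback} $u(t)=u_{y(t)}$ to be realized as an admissible open-loop control for which the closed-loop delayed equation actually has a solution, and this is exactly the point you wave away. The difficulty is not resolved by the delay: the implicit coupling is through the \emph{current} state $y(t)$ (the feedback is evaluated at $y(t)$, not at $y(t-\tau)$), so splitting time into intervals $[k\tau,(k+1)\tau]$ does not decouple anything, contrary to what you claim. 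Moreover $(H_4)$ gives no regularity of $x\mapsto u_x$ (the projection $\pi(x)$ need not be unique and the admissible ``aiming'' set need not depend semicontinuously on $x$), so the closed-loop right-hand side is discontinuous in the state and Carath\'eodory existence fails in general; a differential-inclusion/Filippov repair would additionally require convexity of the relevant velocity sets, which is not among the hypotheses. Your appeal to \cite{Cansin} is also inaccurate: Cannarsa--Sinestrari do not use the aiming feedback, precisely to avoid this issue.

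The paper's proof is, in effect, the rigorous discretization of your heuristic, and that discretization is the missing idea. One freezes the control: starting from $x_j$, one applies the \emph{constant} control $u_{x_j}$ given by $(H_4)$ at $x_j(0)$ for a short time $t_j=\frac{\mu-2\bar ML\tau}{4M^2}d_{\mathcal K}(x_j(0))$, estimates $\frac12\frac{d}{dt}\lvert y_j(t)-\pi(x_j(0))\rvert^2\le-\frac{\mu}{2}d_{\mathcal K}(x_j(0))$ on $[0,t_j]$ (comparing with $f(x_j(0),u_{x_j})$ rather than with $f(y_j(t),u(t))$, which is why the extra restriction $\delta\le M/L$ appears), and deduces the geometric decay $d_{\mathcal K}(x_{j+1}(0))\le k\,d_{\mathcal K}(x_j(0))$ with $k<1$. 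Concatenating these piecewise-constant controls gives an admissible control with $\sum_j t_j\le C\,d_{\mathcal K}(x_0(0))$, which is the estimate \eqref{estimate}. Your continuous inequality $g'\le-\mu/2$ is the formal limit of this scheme, but as written your proof does not produce an admissible control, so the argument is incomplete; to repair it you would either have to carry out the discrete construction (i.e.\ the paper's proof) or supply a genuine existence/selection argument for the aiming feedback, which the stated hypotheses do not support.
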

	\begin{proof}
		From our assumptions, possibly taking a larger $M$, we may assume $M\geq\mu$. {Also, it is not restrictive to assume $\bar{M}\geq M$. Thus, $M^*=\bar{M}$ and the smallness condition on the time delay size reads as $\tau<\frac{\mu}{2\bar{M}L}$.} Let $x_{0}\in \text{Lip}_{\bar{M}}([-\tau,0];\mathbb{R}^{n})$ be such that $x_{0}(0)\in \mathcal{K}_{\delta}$, where 

$$\delta=\min\left\{\frac{M}{L},\sigma\right\}.$$
		We can suppose that $x_{0}(0)\in \mathcal{K}_{\delta}\setminus\mathcal{K}$, since otherwise inequality \eqref{estimate} is trivially satisfied for any positive constant $C$. 
		
Following \cite{Cansin}, we define inductively a sequence $(x_{j})_{j}\subset C([-\tau,0];\mathbb{R}^{n})$ such that $x_{j}(0)\in  \mathcal{K}_{\delta}\setminus\mathcal{K}$, for all $j\geq 0$, in the following way. For all $j\geq 0$, we set \begin{equation}\label{xj}
			x_{j+1}=y_{t_{j}}(\cdot;x_{j},u_{x_{j}}),
		\end{equation}
		where
		\begin{equation}\label{tj}
			t_{j}=\frac{\mu-2\bar ML\tau}{4M^{2}}d_{\mathcal{K}}(x_{j}(0)),
		\end{equation}
		and $u_{x_{j}}$ is chosen according to $(H_{3})$ related to $x_j(0),$ i.e. 

$$f(x_{j}(0),u_{x_{j}})\cdot\frac{x_{j}(0)-\pi(x_{j}(0))}{\lvert x_{j}(0)-\pi(x_{j}(0))\rvert}\leq -\mu.$$
First of all, we have that $x_{j}\in \text{Lip}_{\bar{M}}([-\tau,0];\mathbb{R}^{n})$, for all $j\geq 0$. Indeed, by induction, if $j=0$ we know that $x_{0}\in \text{Lip}_{\bar{M}}([-\tau,0];\mathbb{R}^{n})$. Now, we assume that $x_{j}\in \text{Lip}_{\bar{M}}([-\tau,0];\mathbb{R}^{n})$, for some $j\geq 0$, and we show that also $x_{j+1}\in \text{Lip}_{\bar{M}}([-\tau,0];\mathbb{R}^{n})$. For every $s,t\in [-\tau,0]$, by definition of $x_{j+1}$, it holds 

$$\lvert x_{j+1}(s)-x_{j+1}(t)\rvert=\lvert y(t_{j}+s;x_{j},u_{x_{j}})-y(t_{j}+t;x_{j},u_{x_{j}})\rvert.$$
		Then, if both $t_{j}+s,t_{j}+t<0$, by using the fact that $x_{j}\in \text{Lip}_{\bar{M}}([-\tau,0];\mathbb{R}^{n})$, we get

		$$\lvert x_{j+1}(s)-x_{j+1}(t)\rvert=\lvert x_{j}(t_{j}+s)-x_{j}(t_{j}+t)\rvert\leq \bar M\lvert s-t\rvert.$$
		On the other hand, if both $t_{0}+s,t_{0}+t>0$, assuming for simplicty that $t\leq s$ (the case $t>s$ is analogous) we can write

		$$\begin{array}{l}
		\vspace{0.3cm}\displaystyle{\lvert x_{j+1}(s)-x_{j+1}(t)\rvert}\\
		\vspace{0.3cm}\displaystyle{\hspace{1 cm}=\left\lvert y(0;x_{j},u_{x_{j}})+\int_{0}^{t_{j}+s}y'(r;x_{j},u_{x_{j}})dr-y(0;x_{j},u_{x_{j}})-\int_{0}^{t_{j}+t}y'(r;x_{j},u_{x_{j}})dr\right\rvert}\\
		\vspace{0.3cm}\displaystyle{\hspace{1 cm}\leq\int_{t_{j}+t}^{t_{j}+s}\lvert y'(r,x_{j},u_{x_{j}})\rvert dr=\int_{t_{j}+t}^{t_{j}+s}\lvert f(y(r-\tau;x_{j},u_{x_{j}}),u_{x_{j}})\rvert dr}\\
		\displaystyle{\hspace{1 cm}\leq M\lvert s-t\rvert\leq \bar M\lvert s-t\rvert.}
		\end{array}$$
		Finally, if $t_{j}+s>0$ and $t_{j}+t<0$ (or, analogously, if $t_{j}+s<0$ and $t_{j}+t>0$), it turns out that

		$$\begin{array}{l}
			\vspace{0.3cm}\displaystyle{\lvert x_{j+1}(s)-x_{j+1}(t)\rvert=\left\lvert y(0;x_{j},u_{x_{j}})+\int_{0}^{t_{j}+s}y'(r;x_{j},u_{x_{j}})dr- x_{j}(t_{j}+t)\right\rvert}\\
			\vspace{0.3cm}\displaystyle{\hspace{2.5 cm}\leq \lvert x_{j}(0)-x_{j}(t_{j}+t)\rvert+\int_{0}^{t_{j}+s}\lvert f(y(r-\tau;x_{j},u_{x_{j}}), u_{x_{j}})\rvert dr}\\
			\vspace{0.3cm}\displaystyle{\hspace{2.5 cm}\leq \bar M\lvert t_{j}+t\rvert+ M (t_{j}+s)}\\
			\vspace{0.3cm}\displaystyle{\hspace{2.5 cm}\leq \bar M\lvert t_{j}+t\rvert+ \bar M (t_{j}+s)}\\
			\vspace{0.3cm}\displaystyle{\hspace{2.5 cm}=\bar M(t_{j}+s-t_{j}-t)}\\
			\displaystyle{\hspace{2.5 cm}=\bar M\lvert s-t\rvert.}
		\end{array}$$
		Therefore $x_{j+1}\in \text{Lip}_{\bar{M}}([-\tau,0];\mathbb{R}^{n})$ as desired.
		Next, we claim that \begin{equation}\label{dist}
			d_{\mathcal{K}}(x_{j+1}(0))\leq k d_{\mathcal{K}}(x_{j}(0)),\quad \forall j\geq 0,
		\end{equation}
		where \begin{equation}\label{k}
			k=\sqrt{1-\mu\frac{\mu-2\bar ML\tau}{4M^{2}}}\in [0,1).
		\end{equation}
		To this aim, for all $j\geq 0$, we define \begin{equation}\label{yj}
			y_{j}(t)=y(t;x_{j},u_{x_{j}}),\quad \forall t\in [0,t_{j}]. 
		\end{equation}
		Given $j\geq 0$, for every $t\in [0,t_{j}]$, we have that

		$$\begin{array}{l}
			\vspace{0.3cm}\displaystyle{\frac{1}{2}\frac{d}{dt}\lvert y_{j}(t)-\pi(x_{j}(0))\rvert^{2}=\langle f(y_{j}(t-\tau),u_{x_{j}}),y_{j}(t)-\pi(x_{j}(0))\rangle}\\
			\vspace{0.3cm}\displaystyle{\hspace{2.5cm}=\langle f(x_{j}(0),u_{x_{j}}),x_{j}(0)-\pi(x_{j}(0))\rangle}\\
			\vspace{0.3cm}\displaystyle{\hspace{2.8cm}+\langle f(y_{j}(t-\tau),u_{x_{j}})-f(x_{j}(0),u_{x_{j}}), x_{j}(0)-\pi(x_{j}(0))\rangle}\\
			\displaystyle{\hspace{2.8 cm}+\langle f(y_{j}(t-\tau),u_{x_{j}}),y_{j}(t)-x_{j}(0)\rangle.}
		\end{array}$$
		Therefore, using $(H_{1})$, $(H_{3})$ and the Cauchy-Schwarz inequality, we get 

$$\begin{array}{l}
			\vspace{0.3cm}\displaystyle{\frac{1}{2}\frac{d}{dt}\lvert y_{j}(t)-\pi(x_{j}(0))\rvert^{2}\leq -\mu d_{\mathcal{K}}(x_{j}(0))+Ld_{\mathcal{K}}(x_{j}(0))\lvert y_{j}(t-\tau)-x_{j}(0)\rvert}\\
			\displaystyle{\hspace{4.6cm}+M\lvert y_{j}(t)-x_{j}(0)\rvert.}
		\end{array}$$
		We first estimate

		$$\begin{array}{l}
			\vspace{0.3cm}\displaystyle{\lvert y_{j}(t)-x_{j}(0)\rvert=\lvert y(t;x_{j},u_{x_{j}})-x_{j}(0)\rvert}\\
			\vspace{0.3cm}\displaystyle{\hspace{2.6cm}=\left\lvert x_{j}(0)+\int_{0}^{t}y'(s;x_{j},u_{x_{j}})ds-x_{j}(0)\right\rvert}\\
			\vspace{0.3cm}\displaystyle{\hspace{2.6cm}\leq \int_{0}^{t}\lvert f(y(s-\tau);x_{j},u_{x_{j}})\rvert ds}\\
			\displaystyle{\hspace{2.6cm}\leq Mt.}
		\end{array}$$
		Thus, \begin{equation}\label{1}
			\frac{1}{2}\frac{d}{dt}\lvert y_{j}(t)-\pi(x_{j}(0))\rvert^{2}\leq -\mu d_{\mathcal{K}}(x_{j}(0))+Ld_{\mathcal{K}}(x_{j}(0))\lvert y_{j}(t-\tau)-x_{j}(0)\rvert+M^{2}t.
		\end{equation}
		Moreover, 

		$$\lvert y_{j}(t-\tau)-x_{j}(0)\rvert=\lvert y(t-\tau;x_{j},u_{x_{j}})-x_{j}(0)\rvert.$$
		We can distinguish two different situations. Assume first that $t-\tau\leq0$. Then, since $x_{j}\in \text{Lip}_{\bar{M}}([-\tau,0];\mathbb{R}^{n})$, we have
	
	$$\lvert y_{j}(t-\tau)-x_{j}(0)\rvert=\lvert x_{j}(t-\tau)-x_{j}(0)\rvert\leq \bar M\lvert t-\tau\rvert=\bar M(\tau-t)\leq \bar M\tau.$$
		Therefore, \eqref{1} becomes

		$$\begin{array}{l}
			\vspace{0.3cm}\displaystyle{\frac{1}{2}\frac{d}{dt}\lvert y_{j}(t)-\pi(x_{j}(0))\rvert^{2}\leq -\mu d_{\mathcal{K}}(x_{j}(0))+\bar ML\tau d_{\mathcal{K}}(x_{j}(0))+M^{2}t}\\
			\vspace{0.3cm}\displaystyle{\hspace{2cm}\leq -\mu d_{\mathcal{K}}(x_{j}(0))+\bar ML\tau d_{\mathcal{K}}(x_{j}(0))+M^{2}t_{j}}\\
			\vspace{0.3cm}\displaystyle{\hspace{2cm}=-\mu d_{\mathcal{K}}(x_{j}(0))+\bar ML\tau d_{\mathcal{K}}(x_{j}(0))+M^{2}\frac{\mu-2\bar ML\tau}{4M^{2}}d_{\mathcal{K}}(x_{j}(0))}\\
			\vspace{0.3cm}\displaystyle{\hspace{2cm}\leq-\mu d_{\mathcal{K}}(x_{j}(0))+\bar ML\tau d_{\mathcal{K}}(x_{j}(0))+M^{2}\frac{\mu-2\bar ML\tau}{2M^{2}}d_{\mathcal{K}}(x_{j}(0))}\\
			\vspace{0.3cm}\displaystyle{\hspace{2cm}=-\mu d_{\mathcal{K}}(x_{j}(0))+\bar ML\tau d_{\mathcal{K}}(x_{j}(0))+\frac{\mu}{2}d_{\mathcal{K}}(x_{j}(0))-\bar ML\tau d_{\mathcal{K}}(x_{j}(0))}\\
			\displaystyle{\hspace{2cm}=-\frac{\mu}{2}d_{\mathcal{K}}(x_{j}(0)).}
		\end{array}$$
	On the other hand, assume that $t-\tau>0$. Then, 
	
$$\begin{array}{l}
		\vspace{0.3cm}\displaystyle{\lvert y_{j}(t-\tau)-x_{j}(0)\rvert=\left\lvert x_{j}(0)+\int_{0}^{t}y'(s;x_{j},u_{x_{j}})ds-x_{j}(0)\right\rvert}\\
		\vspace{0.3cm}\displaystyle{\hspace{3.4cm}\leq \int_{0}^{t}\lvert f(y(s-\tau;x_{j},u_{x_{j}}),u_{x_{j}})\rvert ds}\\
		\displaystyle{\hspace{3.4cm}\leq M(t-\tau)\leq Mt.}
	\end{array}$$
Using this and the fact that $x_{j}(0)\in \mathcal{K}_{\delta}$ with $\delta\leq \frac{M}{L}$ in \eqref{1}, we get

$$\begin{array}{l}
	\vspace{0.3cm}\displaystyle{\frac{1}{2}\frac{d}{dt}\lvert y_{j}(t)-\pi(x_{j}(0))\rvert^{2}\leq -\mu d_{\mathcal{K}}(x_{j}(0))+LMt d_{\mathcal{K}}(x_{j}(0))+M^{2}t}\\
	\vspace{0.3cm}\displaystyle{\hspace{2cm}\leq-\mu d_{\mathcal{K}}(x_{j}(0))+M^{2}t +M^{2}t =-\mu d_{\mathcal{K}}(x_{j}(0))+2M^{2}t}\\
	\vspace{0.3cm}\displaystyle{\hspace{2cm}\leq -\mu d_{\mathcal{K}}(x_{j}(0))+2M^{2}t_{j}}\\
	\vspace{0.3cm}\displaystyle{\hspace{2cm}\leq -\mu d_{\mathcal{K}}(x_{j}(0))+2M^{2}\frac{\mu}{4M^{2}}d_{\mathcal{K}}(x_{j}(0))}\\
	\displaystyle{\hspace{2cm}=-\frac{\mu}{2}d_{\mathcal{K}}(x_{j}(0)).}
\end{array}$$
We have so proven that
		\begin{equation}\label{dis1}
				\frac{1}{2}\frac{d}{dt}\lvert y_{j}(t)-\pi(x_{j}(0))\rvert^{2}\leq -\frac{\mu}{2}d_{\mathcal{K}}(x_{j}(0)), \quad \forall t \in [0,t_{j}].
		\end{equation}
		Thus, integrating \eqref{dis1} over $(0,t)$, we have that
		\begin{equation}\label{dis4}
			\begin{split}
				\lvert y_{j}(t)-\pi(x_{j}(0))\rvert^{2}&\leq -\mu d_{\mathcal{K}}(x_{j}(0))t+ \lvert y_{j}(0)-\pi(x_{j}(0))\rvert^{2}\\&=-\mu d_{\mathcal{K}}(x_{j}(0))t+\lvert x_{j}(0)-\pi(x_{j}(0))\rvert^{2}\\&=-\mu d_{\mathcal{K}}(x_{j}(0))t+d^{2}_{\mathcal{K}}(x_{j}(0)),
			\end{split}
		\end{equation}
		for all $t\in [0,t_{j}]$. In particular, for $t=t_{j}$, \eqref{dis4} yields

		$$\lvert y_{j}(t_{j})-\pi(x_{j}(0))\rvert^{2}\leq -\mu d_{\mathcal{K}}(x_{j}(0))t_{j}+d^{2}_{\mathcal{K}}(x_{j}(0)).$$
		As a consequence, we have 

		$$\begin{array}{l}
			\vspace{0.3cm}\displaystyle{d^{2}_{\mathcal{K}}(x_{j+1}(0))=d^{2}_{\mathcal{K}}(y_{j}(t_{j}))}\\
			\vspace{0.3cm}\displaystyle{\hspace{2.2cm}\leq\lvert y_{j}(t_{j})-\pi(x_{j}(0))\rvert^{2}}\\
			\vspace{0.3cm}\displaystyle{\hspace{2.2cm}\leq -\mu d_{\mathcal{K}}(x_{j}(0))t_{j}+d^{2}_{\mathcal{K}}(x_{j}(0))}\\
			\vspace{0.3cm}\displaystyle{ \hspace{2.2cm}=-\mu d_{\mathcal{K}}(x_{j}(0))\frac{\mu-2\bar ML\tau}{4M^{2}}d_{\mathcal{K}}(x_{j}(0))+d^{2}_{\mathcal{K}}(x_{j}(0))}\\
			\displaystyle{\hspace{2.2cm}=\left(1-\mu\,\frac{\mu-2\bar ML\tau}{4M^{2}}\right)d_{\mathcal{K}}^{2}(x_{j}(0)),}
		\end{array}$$
		from which follows that

		$$d_{\mathcal{K}}(x_{j+1}(0))\leq\sqrt{1-\mu\,\frac{\mu-2\bar ML\tau}{4M^{2}}}d_{\mathcal{K}}(x_{j}(0))=kd_{\mathcal{K}}(x_{j}(0)),$$
		i.e. \eqref{dist} is fulfilled.
		Now, thanks to an induction argument, \eqref{dist} implies that
		\begin{equation}\label{dist0}
			d_{\mathcal{K}}(x_{j}(0))\leq k^{j}d_{\mathcal{K}}(x_{0}(0)),
		\end{equation}
		for all $j\geq0$. Hence, using \eqref{dist0}, it turns out that \begin{equation}\label{2}
			d_{\mathcal{K}}(x_{j}(0))\to 0 \quad \text{as}\,\, j\to\infty.
		\end{equation}
		Furthermore, from \eqref{tj}, \eqref{k} and \eqref{dist0}, we have that

		$$\begin{array}{l}
			\vspace{0.3cm}\displaystyle{\sum_{j=0}^{\infty}t_{j}=\frac{\mu-2\bar ML\tau}{4M^{2}}\sum_{j=0}^{\infty}d_{\mathcal{K}}(x_{j}(0))\leq \frac{\mu-2\bar ML\tau}{4M^{2}}d_{\mathcal{K}}(x_{0}(0))\sum_{j=0}^{\infty}k^{j}}\\
			\displaystyle{\hspace{3cm}=\frac{\mu-2\bar ML\tau}{4M^{2}}d_{\mathcal{K}}(x_{0}(0))\frac{1}{1-k}.}
		\end{array}$$
		So, setting

		$$C=\frac{\mu-2\bar ML\tau}{4M^{2}}\frac{1}{1-k},$$
		it holds 
		\begin{equation}\label{3}
			\sum_{j=0}^{\infty}t_{j}\leq Cd_{\mathcal{K}}(x_{0}(0)).
		\end{equation}

		Now, let us define the control $\bar{u}:[0,+\infty[\rightarrow U$ as follows

		$$\bar{u}(t)=\begin{cases}u_{x_{h}},\quad\quad\quad\hspace{0,4 cm}\text{if}\, \sum_{j=0}^{h-1} t_{j}\leq t< \sum_{j=0}^{h} t_{j}, \,\text{for\, some}\, h\geq0,\\\text{arbitrary},\,\,\,\quad \text{if} \,\,t\geq \sum_{j=0}^{\infty} t_{j}.
		\end{cases}$$
		We can note that, for every $j\geq 0$, 

$$y\left(\sum_{h=0}^{j}t_{j};x_{0},\bar{u}\right)=y\left(\sum_{h=0}^{j}t_{j};x_{0},u_{x_{j+1}}\right)=x_{j+1}(0).$$
		Therefore, using \eqref{2} and \eqref{3}, we can conclude that

		$$T(x_{0})\leq\theta(x_{0},\bar{u})=\sum_{h=0}^{\infty}t_{j}\leq Cd_{\mathcal{K}}(x_{0}(0)).$$\end{proof}

{Now, we are able to prove that the minimum time functional is locally Lipschitz continuous in suitable subsets of the reachable set. Note that, with respect to the undelayed case we do not have here the local Lipschitz continuity of the minimum time function on  the reachable set but only on subsets of Lipschitz continuous data. Same remark applies to the semiconcavity property (see Theorem \ref{semiconcavitythm}). }

\begin{thm}\label{lip}
{Assume that  $(H_{1})$, $(H_{2})$, $(H_{4})$ hold and let $\bar M>0$. Let us denote with $M^*:=\max\{M,\bar{M}\}$ and 
	assume  $\displaystyle{\tau<\frac{\mu}{2 M^*L}}$. Then, $\mathcal{R}_{ M^*}$ is an open subset of $\text{Lip}_{{M}^*}([-\tau,0];\mathbb{R}^{n})$ and the minimum time functional $T$ is locally Lipschitz continuous in $\mathcal{R}_{M^*}$.}
\end{thm}
\begin{proof} {It is not restrictive to assume that $\bar{M}\geq M$. Thus, $M^*=\bar{M}$ and the smallness condition on the time delay size reads as $\tau<\frac{\mu}{2\bar{M}L}$. Fix $x_{0}\in \mathcal{R}_{\bar M}$.} Let $x$ be any function in $\text{Lip}_{\bar{M}}([-\tau,0];\mathbb{R}^{n})$ such that \begin{equation}\label{norm}
		\lVert x-x_{0}\rVert_{\infty}<\delta \frac{e^{-L(T(x_{0})+1)}}{(1+L\tau)},
	\end{equation}
where $\delta$ is the positive constant in Proposition \ref{stima_distanza}. We claim that
\begin{equation}\label{norma1}
	{T(x)\leq T(x_0)+ C(1+L\tau)e^{LT(x_{0})}\lVert x-x_{0}\rVert_{\infty}.}
\end{equation}
where $C>0$ is the positive constant in \eqref{estimate}.

Without loss of generality, we can assume that $T(x)>T(x_0)$, since otherwise \eqref{norma1} is trivially satisfied. Note that from the condition $T(x)>T(x_0)$, $T(x)$ is not necessary finite. Then, for $\epsilon>0$ small enough $T(x)\geq T(x_0)+\epsilon$. By definition of $T(x_{0})$, in correspondence of $\epsilon$, there exists a control $u_{\epsilon}$ such that \begin{equation}\label{inf1}
		\theta_{\epsilon}:=\theta(x_{0},u_{\epsilon})< T(x_{0})+\epsilon.
	\end{equation}
Therefore, $T(x)> \theta_{\epsilon}$. Moreover, using $(H_{1})$, for every $t\in [0,\theta_{\epsilon}]$, we get

$$\begin{array}{l}
	\vspace{0.3cm}\displaystyle{\lvert y(t;x,u_{\epsilon})-y(t;x_{0},u_{\epsilon})\rvert=\left\lvert y(0;x,u_{\epsilon})+\int_{0}^{t}y'(s;x,u_{\epsilon})ds-y(0;x_{0},u_{\epsilon})-\int_{0}^{t}y'(s;x_{0},u_{\epsilon})ds\right\rvert}\\
	\vspace{0.3cm}\displaystyle{\hspace{4.3cm}\leq\lvert x(0)-x_{0}(0)\rvert+\int_{0}^{t}\lvert y'(s;x,u_{\epsilon})-y'(s;x_{0},u_{\epsilon})\rvert ds}\\
	\vspace{0.3cm}\displaystyle{\hspace{4.3cm}\leq \lVert x-x_{0}\rVert_{\infty}+\int_{0}^{t}\lvert f(y(s-\tau;x,u_{\epsilon}),u_{\epsilon})-f(y(s-\tau;x_{0},u_{\epsilon}),u_{\epsilon})\rvert ds}\\
	\displaystyle{\hspace{4.3cm}\leq \lVert x-x_{0}\rVert_{\infty}+L\int_{0}^{t}\lvert y(s-\tau;x,u_{\epsilon})-y(s-\tau;x_{0},u_{\epsilon})\rvert ds.}
\end{array}$$
Now, we distinguish two situations. Assume first that $t\leq \tau$. Then, for all $s\in [0,t]$, it holds that $s-\tau\in [-\tau,0]$ and

	$$\lvert y(s-\tau;x,u_{\epsilon})-y(s-\tau;x_{0},u_{\epsilon})\rvert=\lvert x(s-\tau)-x_{0}(s-\tau)\rvert\leq \lVert x-x_{0}\rVert_{\infty}.$$	
	Thus, 

$$\lvert y(t;x,u_{\epsilon})-y(t;x_{0},u_{\epsilon})\rvert\leq \lVert x-x_{0}\rVert_{\infty}+L\lVert x-x_{0}\rVert_{\infty}t=(1+L\tau)\lVert x-x_{0}\rVert_{\infty}.$$
	On the contrary, assume that $t>\tau$. Then, using a change of variable, we can write

	$$\begin{array}{l}
		\vspace{0.3cm}\displaystyle{\int_{0}^{t}\lvert y(s-\tau;x,u_{\epsilon})-y(s-\tau;x_{0},u_{\epsilon})\rvert ds=\int_{-\tau}^{t-\tau}\lvert y(s;x,u_{\epsilon})-y(s;x_{0},u_{\epsilon})\rvert ds}\\
		\vspace{0.3cm}\displaystyle{\hspace{1.4cm}=\int_{-\tau}^{0}\lvert y(s;x,u_{\epsilon})-y(s;x_{0},u_{\epsilon})\rvert ds+\int_{0}^{t-\tau}\lvert y(s;x,u_{\epsilon})-y(s;x_{0},u_{\epsilon})\rvert  ds}\\
		\vspace{0.3cm}\displaystyle{\hspace{1.4cm}=\int_{-\tau}^{0}\lvert x(s)-x_{0}(s)\rvert ds+\int_{0}^{t-\tau}\lvert y(s;x,u_{\epsilon})-y(s;x_{0},u_{\epsilon})\rvert  ds}\\
		\displaystyle{\hspace{1.4cm}\leq \lVert x-x_{0}\rVert_{\infty}\tau+\int_{0}^{t}\lvert y(s;x,u_{\epsilon})-y(s;x_{0},u_{\epsilon})\rvert  ds.}
	\end{array}$$
	As a consequence, it comes that, for all $t\in [0, \theta_\epsilon],$
	
$$\begin{array}{l}
		\vspace{0.3cm}\displaystyle{\lvert y(t;x,u_{\epsilon})-y(t;x_{0},u_{\epsilon})\rvert\leq \lVert x-x_{0}\rVert_{\infty}}\\
		\vspace{0.3cm}\displaystyle{\hspace{5.3cm}+L\tau\lVert x-x_{0}\rVert_{\infty}+L\int_{0}^{t}\lvert y(s;x,u_{\epsilon})-y(s;x_{0},u_{\epsilon})\rvert  ds}\\
		\displaystyle{\hspace{4.8cm}\leq(1+L\tau)\lVert x-x_{0}\rVert_{\infty}+L\int_{0}^{t}\lvert y(s;x,u_{\epsilon})-y(s;x_{0},u_{\epsilon})\rvert  ds.}
	\end{array}$$
	Therefore, the Gronwall's inequality yields
	 \begin{equation}\label{estgron}
		\lvert y(t;x,u_{\epsilon})-y(t;x_{0},u_{\epsilon})\rvert\leq (1+L\tau)e^{Lt}\lVert x-x_{0}\rVert_{\infty},\quad \forall t\in [0,\theta_{\epsilon}].
	\end{equation}
	In particular, for $t=\theta_{\epsilon}$, we have

	$$\lvert y(\theta_{\epsilon};x,u_{\epsilon})-y(\theta_{\epsilon};x_{0},u_{\epsilon})\rvert\leq (1+L\tau)e^{L\theta_{\epsilon}}\lVert x-x_{0}\rVert_{\infty}.$$
	Thus, using \eqref{inf1}, we get \begin{equation}\label{lip1}
		\lvert y(\theta_{\epsilon};x,u_{\epsilon})-y(\theta_{\epsilon};x_{0},u_{\epsilon})\rvert\leq (1+L\tau)e^{L(T(x_{0})+\epsilon)}\lVert x-x_{0}\rVert_{\infty}.
	\end{equation} 
Let us note that $y_{\theta_{\epsilon}}(\cdot;x,u_{\epsilon})\in \text{Lip}_{\bar{M}}([-\tau,0];\mathbb{R}^{n})$. Indeed, let $s,t\in [-\tau,0]$. We can assume without loss of generality that $s\geq t$. Then, if both $\theta_{\epsilon}+s$, $\theta_{\epsilon}+t\leq0$, using the fact that $x\in \text{Lip}_{\bar{M}}([-\tau,0];\mathbb{R}^{n})$, we have that 

	$$\begin{array}{l}
		\vspace{0.3cm}\displaystyle{\lvert y_{\theta_{\epsilon}}(s;x,u_{\epsilon})-y_{\theta_{\epsilon}}(t;x,u_{\epsilon})\rvert=\lvert y(\theta_{\epsilon
			}+s;x,u_{\epsilon})-y(\theta_{\epsilon}+t;x,u_{\epsilon})\rvert}\\
		\displaystyle{\hspace{2cm}=\lvert x(\theta_{\epsilon}+s)-x(\theta_{\epsilon}+t)\rvert\leq \bar M\lvert s-t\rvert.}
	\end{array}$$
	On the other hand, if both $\theta_{\epsilon}+s,\theta_{\epsilon}+t>0$, from $(H_{1})$ we can write 

	$$\begin{array}{l}
		\vspace{0.3cm}\displaystyle{\lvert y_{\theta_{\epsilon}}(s;x,u_{\epsilon})-y_{\theta_{\epsilon}}(t;x,u_{\epsilon})\rvert=\lvert y(\theta_{\epsilon
			}+s;x,u_{\epsilon})-y(\theta_{\epsilon}+t;x,u_{\epsilon})\rvert}\\
		\vspace{0.3cm}\displaystyle{\hspace{4.8cm}=\left\lvert x(0)+\int_{0}^{\theta_{\epsilon}+s}y'(r;x,u_{\epsilon})dr-x(0)-\int_{0}^{\theta_{\epsilon}+t}y'(r;x,u_{\epsilon})dr\right\rvert}\\
		\vspace{0.3cm}\displaystyle{\hspace{4.8cm}\leq \int_{\theta_{\epsilon}+t}^{\theta_{\epsilon}+s}\lvert y'(r;x,u_{\epsilon})\rvert dr}\\
		\vspace{0.3cm}\displaystyle{\hspace{4.8cm}=\int_{\theta_{\epsilon}+t}^{\theta_{\epsilon}+s}\lvert f(y(r-\tau;x,u_{\epsilon}),u_{\epsilon})\rvert dr}\\
		\displaystyle{\hspace{4.8cm}\leq M (s-t)\leq \bar M(s-t)=\bar M\lvert s-t\rvert.}
	\end{array}$$
	Finally, if $\theta_{\epsilon}+s>0$ and $\theta_{\epsilon}+t\leq0$, using $(H_{1})$ and the fact that $x\in \text{Lip}_{\bar{M}}([-\tau,0];\mathbb{R}^{n})$, we have

	$$\begin{array}{l}
		\vspace{0.3cm}\displaystyle{\lvert y_{\theta_{\epsilon}}(s;x,u_{\epsilon})-y_{\theta_{\epsilon}}(t;x,u_{\epsilon})\rvert=\lvert y(\theta_{\epsilon
			}+s;x,u_{\epsilon})-y(\theta_{\epsilon}+t;x,u_{\epsilon})\rvert}\\
		\vspace{0.3cm}\displaystyle{\hspace{4.8cm}=\left\lvert x(0)+\int_{0}^{\theta_{\epsilon}+s}y'(r;x,u_{\epsilon})dr-x(\theta_{\epsilon}+t)\right\rvert}\\
		\vspace{0.3cm}\displaystyle{\hspace{4.8cm}\leq \lvert x(0)- x(\theta_{\epsilon}+t)\rvert+\int_{0}^{\theta_{\epsilon}+s}\lvert f(y(r-\tau;x,u_{\epsilon}),u_{\epsilon})\rvert dr}\\
		\vspace{0.3cm}\displaystyle{\hspace{4.8cm}\leq \bar M(-\theta_{\epsilon}-t)+M(\theta_{\epsilon}+s)}\\
		\displaystyle{\hspace{4.8cm}\leq \bar M(-\theta_{\epsilon}-t+\theta_{\epsilon}+s)=\bar M\lvert s-t\rvert.}
	\end{array}$$
	Thus, $y_{\theta_{\epsilon}}(\cdot;x,u_{\epsilon})\in \text{Lip}_{\bar{M}}([-\tau,0];\mathbb{R}^{n})$ as claimed before. In addition,  since $\epsilon$ is small, inequalities \eqref{norm} and \eqref{lip1} imply that

	$$\begin{array}{l}
		\vspace{0.3cm}\displaystyle{d_{\mathcal{K}}(y_{\theta_{\epsilon}}(0;x,u_{\epsilon}))=d_{\mathcal{K}}(y(\theta_{\epsilon};x,u_{\epsilon}))\leq \lvert y(\theta_{\epsilon};x,u_{\epsilon})-y(\theta_{\epsilon};x_{0},u_{\epsilon})\rvert}\\
		\vspace{0.3cm}\displaystyle{\hspace{6.1cm}\leq (1+L\tau)e^{L(T(x_{0})+\epsilon)}\lVert x-x_{0}\rVert_{\infty}}\\
		\vspace{0.3cm}\displaystyle{\hspace{6.1cm}\leq (1+L\tau)e^{L(T(x_{0})+1)}\lVert x-x_{0}\rVert_{\infty}}\\
		\displaystyle{\hspace{6.1cm}<(1+L\tau)e^{L(T(x_{0})+1)}\delta \frac{e^{-L(T(x_{0})+1)}}{1+L\tau}=\delta,}
	\end{array}$$
i.e. $y_{\theta_{\epsilon}}(0;x,u_{\epsilon})\in \mathcal{K}_{\delta}$. As a consequence, being $y_{\theta_{\epsilon}}(\cdot;x,u_{\epsilon})\in \text{Lip}_{\bar{M}}([-\tau,0];\mathbb{R}^{n})$ with $y_{\theta_{\epsilon}}(0;x,u_{\epsilon})\in \mathcal{K}_{\delta}$, estimate \eqref{estimate} yields
	\begin{equation}\label{5}
		T(y_{\theta_\epsilon}(\cdot;x,u_{\epsilon}))\leq Cd_{\mathcal{K}}(y_{\theta_\epsilon}(0;x,u_{\epsilon}))=Cd_{\mathcal{K}}(y(\theta_{\epsilon};x,u_{\epsilon})).
	\end{equation}
{In particular,
$$T(y_{\theta_\epsilon}(\cdot;x,u_{\epsilon}))<C\delta,$$and both $x,y_{\theta_\epsilon}(\cdot;x,u_{\epsilon})\in \mathcal{R}$. Now, we apply the dynamic programming principle (Proposition \ref{din}) to obtain}
\begin{equation}\label{lip2}
	T(x)\leq \theta_{\epsilon}+T(y_{\theta_\epsilon}(\cdot;x,u_{\epsilon})).
\end{equation}
Therefore, combining  \eqref{inf1}, \eqref{lip1}, \eqref{5} and \eqref{lip2}, we can conclude that 

$$\begin{array}{l}
	\vspace{0.3cm}\displaystyle{T(x)\leq \theta_{\epsilon} +Cd_{\mathcal{K}}(y(\theta_{\epsilon};x,u_{\epsilon}))}\\
	\vspace{0.3cm}\displaystyle{\hspace{1cm}\leq T(x_{0})+\epsilon+Cd_{\mathcal{K}}(y(\theta_{\epsilon};x,u_{\epsilon}))}\\
	\vspace{0.3cm}\displaystyle{\hspace{1cm}\leq T(x_{0})+\epsilon+C\lvert y(\theta_{\epsilon};x,u_{\epsilon})-y(\theta_{\epsilon};x_{0},u_{\epsilon})\rvert}\\
	\displaystyle{\hspace{1cm}\leq T(x_{0})+\epsilon+C(1+L\tau)e^{L(T(x_{0})+\epsilon)}\lVert x-x_{0}\rVert_{\infty}.}
\end{array}$$
Letting $\epsilon\to 0$, we finally get

 $$
 	T(x)-T(x_{0})\leq C(1+L\tau)e^{LT(x_{0})}\lVert x-x_{0}\rVert_{\infty},
$$
which proves \eqref{norma1}. {As a consequence, we  deduce that $\mathcal{R}_{\bar M}$ is an open subset of $\text{Lip}_{\bar{M}}([-\tau,0];\mathbb{R}^{n}),$ due to the fact that, from \eqref{norma1}, for all $x_0\in \mathcal{R}_{\bar{M}}$}
$${B_{C([-\tau,0];\mathbb{R}^{n})}\left(x_{0},\delta \frac{e^{-L(T(x_{0})+1)}}{1+L\tau}\right)\cap\text{Lip}_{\bar{M}}([-\tau,0];\mathbb{R}^{n})\subset \mathcal{R}_{\bar{M}}, }$$
{where $B_{C([-\tau,0];\mathbb{R}^{n})}\left(x_{0},\delta \frac{e^{-L(T(x_{0})+1)}}{1+L\tau}\right)$ is the open ball in the Banach space $C([-\tau,0];
\mathbb{R}^{n})$ centered at $x_{0}$ with radius $\delta \frac{e^{-L(T(x_{0})+1)}}{1+L\tau}$.}
\\Now, we prove the local Lipschitz continuity of $T$. Let $x_0\in \mathcal{R}_{\bar{M}}$. For every $x\in \text{Lip}_{\bar{M}}([-\tau,0];\mathbb{R}^{n})$ satisfying \eqref{norm}, from \eqref{norma1} it holds

$$\begin{array}{l}
	\vspace{0.3cm}\displaystyle{ T(x)\leq  T(x_{0})+C(1+L\tau)e^{LT(x_{0})}\lVert x-x_{0}\rVert_{\infty}}\\
	\vspace{0.3cm}\displaystyle{\hspace{1cm}<  T(x_{0})+C(1+L\tau)e^{LT(x_{0})} \delta \frac{e^{-L(T(x_{0})+1)}}{1+L\tau}}\\
	\displaystyle{\hspace{1cm}=T(x_{0})+C\delta e^{-L}.}
\end{array}$$
So, setting $\bar{c}:= T(x_{0})+C\delta e^{-L},$ we have that 
\begin{equation}\label{tmboun}
	{T(x)\leq \bar{c},\quad \forall x\in B_{C([-\tau,0];\mathbb{R}^{n})}\Big (x_{0},\delta \frac{e^{-L(T(x_{0})+1)}}{1+L\tau}\Big )\cap \text{Lip}_{\bar{M}}([-\tau,0];\mathbb{R}^{n}).}
\end{equation}
We claim that 
\begin{equation}\label{stimalip}
	{\lvert T(\bar x)-T(\tilde{x})\rvert\leq c\lVert \bar x-\tilde{x}\rVert_{\infty},\quad \forall \bar{x},\tilde{x}\in  B_{C([-\tau,0];\mathbb{R}^{n})}\Big (x_{0},\delta \frac{e^{-L(\bar{c}+1)}}{2(1+L\tau)}\Big )\cap\mathcal{R}_{\bar M}.}
\end{equation}
for some positive constant $c$.
\\To this aim, let $\bar{x},\tilde{x}\in  B_{C([-\tau,0];\mathbb{R}^{n})}\left(x_{0},\delta \frac{e^{-L(\bar{c}+1)}}{2(1+L\tau)}\right)\cap\mathcal{R}_{\bar M}$. We first assume that $T(\bar{x})>T(\tilde{x})$. Then, for $\epsilon>0$ small, $T(\bar{x})\geq T(\tilde{x})+\epsilon$. Also, in correspondence of $\epsilon$, by definition of the value function, there exists a control $u_{\epsilon}$ such that 
\begin{equation}\label{teps}
	\theta_{\epsilon}:=\theta(\tilde{x},u_{\epsilon})<T(\tilde{x})+\epsilon.
\end{equation}
With the same arguments employed for the proof of \eqref{estgron}, it holds
\begin{equation}\label{dist1}
	\lvert y(t;\bar{x},u_{\epsilon})-y(t;\tilde{x},u_{\epsilon})\rvert\leq (1+L\tau)e^{Lt}\lVert \bar{x}-\tilde{x}\rVert_{\infty},\quad \forall t \in [0,\theta_{\epsilon}].
\end{equation}
In addition, the dynamic programming principle (Proposition \ref{din}) yields 
\begin{equation}\label{dist2}
	T(\bar{x})\leq \theta_{\epsilon}+T(y_{\theta_{\epsilon}}(\cdot ;\bar{x},u_{\epsilon})).
\end{equation}
Let us note that, from \eqref{tmboun} and \eqref{dist1} it comes that 

$$\begin{array}{l}
	\vspace{0.3cm}\displaystyle{d_{\mathcal{K}}(y_{\theta_{\epsilon}}(0 ;\bar{x},u_{\epsilon}))=d_{\mathcal{K}}(y(\theta_{\epsilon} ;\bar{x},u_{\epsilon}))\leq \lvert y(\theta_{\epsilon} ;\bar{x},u_{\epsilon})-y(\theta_{\epsilon} ;\tilde{x},u_{\epsilon})\rvert}\\
	\vspace{0.3cm}\displaystyle{\hspace{5.7cm}\leq (1+L\tau)e^{L\theta_{\epsilon}}\lVert \bar{x}-\tilde{x}\rVert_{\infty}}\\
	\vspace{0.3cm}\displaystyle{\hspace{5.7cm}\leq (1+L\tau)e^{L(T(\tilde{x})+\epsilon)}\lVert \bar{x}-\tilde{x}\rVert_{\infty}}\\
	\displaystyle{\hspace{5.7cm}\leq (1+L\tau)e^{L(\bar{c}+1)}\lVert \bar{x}-\tilde{x}\rVert_{\infty}.}
\end{array}$$
Thus, since 

$$\lVert \bar{x}-\tilde{x}\rVert_{\infty}\leq \lVert \bar{x}-x_{0}\rVert_{\infty}+\lVert x_{0}-\tilde{x}\rVert_{\infty}<\delta\frac{e^{L(\bar{c}+1)}}{2(1+L\tau)}+ \delta\frac{e^{L(\bar{c}+1)}}{2(1+L\tau)}=\delta\frac{e^{-L(\bar{c}+1)}}{1+L\tau},$$
we get

$$d_{\mathcal{K}}(y_{\theta_{\epsilon}}(0 ;\bar{x},u_{\epsilon}))=d_{\mathcal{K}}(y(\theta_{\epsilon} ;\bar{x},u_{\epsilon}))< (1+L\tau)e^{L(\bar{c}+1)}\delta\frac{e^{-L(\bar{c}+1)}}{1+L\tau}=\delta,$$
i.e. $y_{\theta_{\epsilon}}(0,\bar{x},u_{\epsilon})\in \mathcal{K}_{\delta}$. Therefore, being $y_{\theta_{\epsilon}}(\cdot,\bar{x},u_{\epsilon})\in \mathcal{R}_{\bar M}$ from the fact that $\bar{x}\in\mathcal{R}_{\bar M}$, \eqref{estimate} yields

$$T(y_{\theta_{\epsilon}}(\cdot,\bar{x},u_{\epsilon}))\leq Cd_{\mathcal{K}}(y_{\theta_{\epsilon}}(0 ;\bar{x},u_{\epsilon})).$$
Finally, combining this last fact with \eqref{tmboun}, \eqref{teps}, \eqref{dist1},  we can write

$$\begin{array}{l}
	\vspace{0.3cm}\displaystyle{T(\bar{x})\leq T(\tilde{x})+\epsilon+T(y_{\theta_{\epsilon}}(\cdot,\bar{x},u_{\epsilon}))}\\
	\vspace{0.3cm}\displaystyle{\hspace{1cm}\leq T(\tilde{x})+\epsilon+Cd_{\mathcal{K}}(y_{\theta_{\epsilon}}(0 ;\bar{x},u_{\epsilon}))}\\
	\vspace{0.3cm}\displaystyle{\hspace{1cm}\leq T(\tilde{x})+\epsilon+C\lvert y(\theta_{\epsilon};\bar{x},u_{\epsilon})-y(\theta_{\epsilon};\tilde{x},u_{\epsilon})\rvert}\\
	\vspace{0.3cm}\displaystyle{\hspace{1cm}\leq T(\tilde{x})+\epsilon+C(1+L\tau)e^{L\theta_{\epsilon}}\lVert \bar{x}-\tilde{x}\rVert_{\infty}}\\
	\vspace{0.3cm}\displaystyle{\hspace{1cm}\leq T(\tilde{x})+\epsilon+C(1+L\tau)e^{L(T(\tilde{x})+\epsilon)}\lVert \bar{x}-\tilde{x}\rVert_{\infty}}\\
	\displaystyle{\hspace{1cm}\leq T(\tilde{x})+\epsilon+C(1+L\tau)e^{L(\bar{c}+\epsilon)}\lVert \bar{x}-\tilde{x}\rVert_{\infty}.}
\end{array}$$
Letting $\epsilon\to 0$, we get

$$T(\bar{x})-T(\tilde{x})\leq C(1+L\tau)e^{L\bar{c}}\lVert \bar{x}-\tilde{x}\rVert_{\infty}.$$
So, setting $c:=C(1+L\tau)e^{L\bar{c}}$, we can conclude that 

$$T(\bar{x})-T(\tilde{x})\leq c\lVert \bar{x}-\tilde{x}\rVert_{\infty}.$$
Now, if $T(\bar{x})<T(\tilde{x})$, exchanging the roles of $\bar{x},\tilde{x}$, in the above arguments, it also holds that 

$$T(\tilde{x})-T(\bar{x})\leq c\lVert \bar{x}-\tilde{x}\rVert_{\infty}.$$
Thus, \eqref{stimalip} is fulfilled and $T$ is Lipschitz continuous in a neighborhood of $x_{0}$. From the arbitrariness of $x_{0}$, the minimum time functional is locally Lipschitz continuous in $\mathcal{R}_{\bar M}$.
\end{proof}

\begin{oss}
{To the aim of our analysis, in the sequel, we will need the Lipschitz continuity of $T(\cdot)$ in ${\mathcal R}_{3 M}.$ As a consequence, by virtue of Theorem \ref{lip}, our semiconcavity result will be proved under the following condition on the time delay size:
$\displaystyle{\tau<\frac{\mu}{6ML}}.$}
\end{oss}

\section{Semiconcavity of the value function}
In this section, we present our main result, which guarantees that the minimum time functional $T$ is semiconcave in a suitable subset of the reachable set $\mathcal{R}$. The semiconcavity result we will prove is the following.
\begin{thm}\label{semiconcavitythm}
	Assume that $\tau<\frac{\mu}{6ML}$. Moreover, suppose that assumptions $(H_{1})$, $(H_{2})$, $(H_{3})$, $(H_{4})$ are satisfied and that the target $\mathcal{K}$ is such that \begin{equation}\label{neccond}
		d_{\mathcal{K}}\,\text{is semiconcave in }\, \mathbb{R}^{n}\setminus\mathring{\mathcal{K}},
	\end{equation}
	where $\mathring{\mathcal{K}}$ denotes the interior of $\mathcal{K}$. Then, the minimum time function $T$  is semiconcave in $\mathcal{R}_{M}\setminus\mathcal{R}_{M}^{\mathcal{K}}$, where ${\mathcal{R}_{M}^{\mathcal{K}}=\{x\in \mathcal{R}_{M}:x(0)\in \mathcal{K}\}}$. 
\end{thm}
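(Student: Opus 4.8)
The plan is to transplant the classical scheme of \cite{Cansin} to the present Banach‑space setting: I would bound $T$ from above at the perturbed data $x\pm h$ through the dynamic programming principle together with the target estimate \eqref{estimate}, bound $T$ from below at $x$ by near‑optimality, and cancel the first‑order contributions using the semiconcavity of $d_{\mathcal K}$ at the point where a near‑optimal trajectory from $x$ meets $\mathcal K$, together with the semiconcavity of $f$ along that trajectory. First I would fix a convex set $K\subset\subset\mathcal R_M\setminus\mathcal R_M^{\mathcal K}$; by compactness of $K$ and Theorem \ref{lip} there are $r_0>0$, $\bar T<\infty$, $c_0>0$ with $d_{\mathcal K}(x(0))\ge r_0$, $T(x)\le\bar T$ and $T$ locally $c_0$‑Lipschitz on $K$, while \eqref{neccond} and the compactness of $\partial\mathcal K$ give a uniform semiconcavity constant $c_{\mathcal K}$ for $d_{\mathcal K}$ on a fixed neighbourhood of $\partial\mathcal K$. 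It is enough to treat $\|h\|_\infty$ small (the remaining case being covered by the Lipschitz bound), and since $\tau<\tfrac{\mu}{6ML}=\tfrac{\mu}{2(3M)L}$, Proposition \ref{stima_distanza} and Theorem \ref{lip} are available on $\mathcal R_{3M}$.

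Given $x,x\pm h\in K$ and $\epsilon>0$, I would pick a control $u$ with $\theta:=\theta(x,u)\le T(x)+\epsilon$; since $x(0)\notin\mathcal K$, the trajectory $y^0:=y(\cdot;x,u)$ first reaches $\mathcal K$ at $t=\theta$, so $\xi:=y^0(\theta)\in\partial\mathcal K$. Running the \emph{same} control $u$ from $x\pm h$ and putting $y^\pm:=y(\cdot;x\pm h,u)$, $z^\pm:=y^\pm-y^0$, the Gronwall computation behind \eqref{estgron} (splitting $\int_0^t\cdot\,ds$ at $s=\tau$) gives $|z^\pm(t)|\le(1+L\tau)e^{Lt}\|h\|_\infty\le C_1\|h\|_\infty$ on $[0,\theta]$; hence, for $\|h\|_\infty$ small, $d_{\mathcal K}(y^\pm(\theta))\le|z^\pm(\theta)|<\delta$ and, exactly as in the proof of Theorem \ref{lip}, $y_\theta(\cdot;x\pm h,u)\in\mathcal R_{3M}$. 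The dynamic programming principle (in the form $T(z)\le t+T(y_t(\cdot;z,u))$, valid for every $t\ge0$ with $y_t(\cdot;z,u)\in\mathcal R$, as in Proposition \ref{din}) and Proposition \ref{stima_distanza} then give $T(x\pm h)\le\theta+C\,d_{\mathcal K}(y^\pm(\theta))$, whereas $T(x)\ge\theta-\epsilon$; adding, $T(x+h)+T(x-h)-2T(x)\le C\big(d_{\mathcal K}(y^+(\theta))+d_{\mathcal K}(y^-(\theta))\big)+2\epsilon$. Thus the theorem reduces to proving $d_{\mathcal K}(y^+(\theta))+d_{\mathcal K}(y^-(\theta))\le c'\|h\|_\infty^2$ with $c'$ independent of $\epsilon$, after which letting $\epsilon\to0$ yields the claim with $c_K:=Cc'$.

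For this last estimate I would set $w:=z^++z^-$ (so $w\equiv0$ on $[-\tau,0]$ and $|w(\theta)|\le2C_1\|h\|_\infty$), note $\tfrac12(y^+(\theta)+y^-(\theta))=\xi+\tfrac12 w(\theta)$, and use the semiconcavity of $d_{\mathcal K}$ near $\xi$ (recall $d_{\mathcal K}(\xi)=0$): first as $d_{\mathcal K}(a)+d_{\mathcal K}(b)\le2d_{\mathcal K}(\tfrac{a+b}{2})+c_{\mathcal K}\big|\tfrac{a-b}{2}\big|^2$ with $a=y^+(\theta)$, $b=y^-(\theta)$, and then as $d_{\mathcal K}(\xi+v)\le\langle q,v\rangle+\tfrac{c_{\mathcal K}}{2}|v|^2$ for a suitable $q\in D^+d_{\mathcal K}(\xi)$, which reduces the target to
\[ d_{\mathcal K}(y^+(\theta))+d_{\mathcal K}(y^-(\theta))\le\langle q,w(\theta)\rangle+C\|h\|_\infty^2 \]
(if any of the points involved fall into $\mathring{\mathcal K}$, the corresponding distance terms simply drop). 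It then remains to bound $\langle q,w(\theta)\rangle$ by $O(\|h\|_\infty^2)$, and this is where $(H_3)$ enters: writing $w(\theta)=\int_0^\theta\big(f(y^+(s-\tau),u(s))+f(y^-(s-\tau),u(s))-2f(y^0(s-\tau),u(s))\big)\,ds$, I would split each integrand into the symmetric second difference of $f(\cdot,u(s))$ about $y^0(s-\tau)+\tfrac12 w(s-\tau)$, estimated by $(H_3)$ in terms of $|z^+(s-\tau)-z^-(s-\tau)|^2$, plus a remainder controlled by $L|w(s-\tau)|$, and then close the estimate on $w$ by a delayed Gronwall argument (again splitting the integral at $s=\tau$, as in Section 3), obtaining $\langle q,w(\theta)\rangle\le c''\|h\|_\infty^2$ with $c''$ depending only on $L,\tau,\bar T$ and the semiconcavity constant of $f$. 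The hard part will be precisely this second‑order trajectory estimate: one must propagate through the delay the one‑sided bound coming from $(H_3)$ and match it with the one‑sided superdifferential inequality for $d_{\mathcal K}$; once this is done, all remaining ingredients are routine adaptations of Section 3 and of \cite{Cansin}.
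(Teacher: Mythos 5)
Your overall skeleton (run the same near‑optimal control from $x\pm h$, first‑order Gronwall bound, second‑order bound on $y^++y^--2y^0$ via $(H_3)$ and a delayed Gronwall, then DPP plus the Petrov estimate \eqref{estimate} and semiconcavity of $d_{\mathcal K}$ at the hitting point) does match the part of the paper's argument corresponding to the situation where neither perturbed trajectory reaches the target early. But the central reduction is flawed: the inequality you reduce to, $d_{\mathcal K}(y^+(\theta))+d_{\mathcal K}(y^-(\theta))\leq c'\lVert h\rVert_\infty^2$, is false in general. If one of the perturbed trajectories, say $y^-$, enters $\mathring{\mathcal K}$ at some $t^*<\theta$, then at time $\theta$ one typically has $y^-(\theta)\in\mathring{\mathcal K}$ at depth of order $\lVert h\rVert_\infty$, and since $y^+(\theta)\approx 2\xi-y^-(\theta)$ up to $O(\lVert h\rVert_\infty^2)$, the point $y^+(\theta)$ sits \emph{outside} $\mathcal K$ at distance of order $\lVert h\rVert_\infty$, not $\lVert h\rVert_\infty^2$ (already visible for a flat or smooth boundary: reflecting an interior point of depth $d$ through a boundary point gives an exterior point at distance $\approx d$). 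Your remark that ``the corresponding distance terms simply drop'' removes only the harmless term $d_{\mathcal K}(y^-(\theta))=0$; the partner term remains first order, and the semiconcavity inequality \eqref{dk} cannot be invoked because it requires all three points $z_1,z_2,\frac{z_1+z_2}{2}$ to lie in $\mathcal K_r\setminus\mathring{\mathcal K}$. In this regime your chain only yields $T(x+h)+T(x-h)-2T(x)\leq O(\lVert h\rVert_\infty)$, i.e.\ nothing beyond Lipschitz continuity: the true second‑order bound comes from a cancellation between $T(x+h)-T(x)$ and $T(x)-T(x-h)$, which your one‑sided bounds $T(x\pm h)\leq\theta+Cd_{\mathcal K}(y^\pm(\theta))$ and $T(x)\geq\theta-\epsilon$ cannot see, since they never use that $T(x-h)$ is smaller than $\theta$ by a matching first‑order amount.

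This is exactly why the paper splits the proof into three cases and proves the auxiliary Lemma \ref{auxlem}. In the problematic case (one trajectory hits at $t^*<T(x)$), the paper stops at $t^*$, uses $T(x-h)\leq t^*$, $T(x+h)\leq t^*+T(x_2)$ and the exact identity $T(x)=t^*+T(\bar x)$ (optimal control, or an approximation argument), reducing the claim to $T(x_2)-2T(\bar x)\leq c\lVert h\rVert_\infty^2$; this is then obtained from the one‑sided estimate $T(x_1+2k)-2T(x_1+k)\leq c\lVert k\rVert_\infty^2$ of Lemma \ref{auxlem} at a base point with $x_1(0)\in\partial\mathcal K$ (whose proof uses the rescaled control $\bar u(t)=u(t/2)$ and, again, \eqref{dk} only at configurations where the interior case can be excluded or trivialized), combined with the local Lipschitz continuity of $T$ on $\mathcal R_{3M}$ from Theorem \ref{lip} and the second‑order closeness $\lVert 2\bar x-x_1-x_2\rVert_\infty=O(\lVert h\rVert_\infty^2)$. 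Your proposal contains no substitute for this mechanism, so as written it proves semiconcavity only under the additional (and unjustified) assumption that neither $y^+$ nor $y^-$ reaches the target before time $\theta$ and that the three relevant endpoints stay outside $\mathring{\mathcal K}$; to complete it you would need to add the early‑hitting case and an analogue of Lemma \ref{auxlem}.
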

For the proof of Theorem \ref{semiconcavitythm}, we need the following auxiliary lemma.
\begin{lem}\label{auxlem}
	Let the hypotheses of Theorem \ref{semiconcavitythm} hold. Then, there exist $\rho>0$, $k>0$ such that $$T(x+2h)-2T(x+h)\leq k\lVert h\rVert_{\infty}^{2},$$
	for every $x\in \mathcal{R}$ and $h\in C([-\tau,0];\mathbb{R}^{n})$ satisfying the following conditions: $x(0)\in \partial\mathcal{K}$; $\lVert h\rVert_{\infty}\leq\rho$; $(x+h)(0)\notin \mathcal{K}$; $x,x+h\in  \mathcal{R}_{M}$, $x+2h\in\mathcal{R}$.
\end{lem}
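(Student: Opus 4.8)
The key structural feature to exploit is that $x(0)\in\partial\mathcal K\subseteq\mathcal K$ forces $T(x)=0$; together with the hypothesis $(x+h)(0)\notin\mathcal K$ this will make the optimal time for $x+h$ \emph{small} (of order $\lVert h\rVert_\infty$), so that the whole estimate becomes a short‑time, second‑order perturbation estimate. First I would note that, since $T$ is locally Lipschitz on $\mathcal R_{3M}$ (Theorem~\ref{lip}, available because $\tau<\frac{\mu}{6ML}$ and $x,x+h\in\mathcal R_M\subseteq\mathcal R_{3M}$), and since the Lipschitz estimate produced in the proof of Theorem~\ref{lip} has a \emph{universal} constant at points where $T$ vanishes, there are $\rho>0$ and $c_0>0$ with $0<T(x+h)\le c_0\lVert h\rVert_\infty$ whenever $\lVert h\rVert_\infty\le\rho$. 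Fix $\epsilon>0$ small and pick a control $u_\epsilon$ with $\theta_\epsilon:=\theta(x+h,u_\epsilon)<T(x+h)+\epsilon$ (so $\theta_\epsilon=O(\lVert h\rVert_\infty)+\epsilon$) and first hitting point $\xi:=y(\theta_\epsilon;x+h,u_\epsilon)\in\partial\mathcal K$. Running the \emph{same} control from $x+2h$, the profile $y_{\theta_\epsilon}(\cdot;x+2h,u_\epsilon)$ is Lipschitz of constant $3M$ (because $x+2h=(x+h)+h$ is) and, for $\rho$ small, has value $\xi_2:=y(\theta_\epsilon;x+2h,u_\epsilon)\in\mathcal K_\delta$ at $0$; hence Proposition~\ref{stima_distanza} applies to it, and combined with the dynamic programming principle (Proposition~\ref{din}; the case $\theta_\epsilon>T(x+2h)$ being trivial)
$$T(x+2h)\ \le\ \theta_\epsilon+T\big(y_{\theta_\epsilon}(\cdot;x+2h,u_\epsilon)\big)\ \le\ \theta_\epsilon+C\,d_{\mathcal K}(\xi_2)\ <\ T(x+h)+\epsilon+C\,d_{\mathcal K}(\xi_2).$$

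The heart of the matter is then to prove $d_{\mathcal K}(\xi_2)\le c\,\lVert h\rVert_\infty^2$; granting it, letting $\epsilon\to0$ and using $T(x+h)\ge0$ gives $T(x+2h)-2T(x+h)\le T(x+2h)-T(x+h)\le Cc\,\lVert h\rVert_\infty^2$, which is the claim. To estimate $d_{\mathcal K}(\xi_2)$ I would introduce the trajectory from $x$ as a ``reflection partner'': with $\xi_0:=y(\theta_\epsilon;x,u_\epsilon)$, the second difference $w(t):=y(t;x,u_\epsilon)+y(t;x+2h,u_\epsilon)-2y(t;x+h,u_\epsilon)$ vanishes on $[-\tau,0]$ and, by $(H_1)$, satisfies $|w'(t)|\le 2L(1+L\tau)e^{L\theta_\epsilon}\lVert h\rVert_\infty$ on $[0,\theta_\epsilon]$; integrating over an interval of length $\theta_\epsilon=O(\lVert h\rVert_\infty)+\epsilon$ gives $|w(\theta_\epsilon)|=O(\lVert h\rVert_\infty^2)+O(\epsilon\lVert h\rVert_\infty)$ (note this uses only that $\theta_\epsilon$ is small, not $(H_3)$). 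Consequently the midpoint $\tfrac12(\xi_0+\xi_2)=\xi+\tfrac12w(\theta_\epsilon)$ lies within $O(\lVert h\rVert_\infty^2)+O(\epsilon\lVert h\rVert_\infty)$ of $\xi\in\partial\mathcal K$, while $|\xi_0-\xi_2|=O(\lVert h\rVert_\infty)$ by the Gronwall estimate behind \eqref{estgron}. Feeding these into the semiconcavity inequality for $d_{\mathcal K}$ on $\mathbb R^n\setminus\mathring{\mathcal K}$ (hypothesis \eqref{neccond}) for the triple $\xi_0,\ \tfrac12(\xi_0+\xi_2),\ \xi_2$, together with $d_{\mathcal K}\ge0$, yields
$$d_{\mathcal K}(\xi_2)\ \le\ 2\,d_{\mathcal K}\!\big(\tfrac12(\xi_0+\xi_2)\big)+c_K\big|\tfrac12(\xi_0-\xi_2)\big|^2\ =\ O(\lVert h\rVert_\infty^2)+O(\epsilon\lVert h\rVert_\infty),$$
and letting $\epsilon\to0$ gives $d_{\mathcal K}(\xi_2)=O(\lVert h\rVert_\infty^2)$, with $\rho$ taken as the smallest threshold used and $k$ built from $C$, $c_K$, $c_0$, $L$, $\tau$.

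\textbf{Main obstacle.} The delicate point is making the use of the semiconcavity of $d_{\mathcal K}$ legitimate near the moving hitting point $\xi\in\partial\mathcal K$. Because $(x+h)(0)\notin\mathcal K$, the increment $h(0)$ points out of $\mathcal K$ at $x(0)$, and the auxiliary endpoint $\xi_0\simeq\xi-h(0)$ (and sometimes the midpoint itself) typically falls \emph{inside} $\mathring{\mathcal K}$, where $d_{\mathcal K}$ vanishes and the defining inequality on $\mathbb R^n\setminus\mathring{\mathcal K}$ does not literally apply; one must therefore argue with the correct one‑sided restriction of $d_{\mathcal K}$, use that $d_{\mathcal K}(\xi_0)=0$ in that regime, secure a semiconcavity constant $c_K$ uniform on a fixed neighbourhood of $\partial\mathcal K$ (this, together with $x(0)\in\partial\mathcal K$, is where \eqref{neccond} is genuinely needed), and — if the crude use of the Petrov constant $C$ in the first display is too lossy for the cancellation — absorb the leading‑order term by comparing directly with the trajectory from $x$ rather than only through the distance function. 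A secondary, purely bookkeeping, difficulty, and the precise reason for the hypothesis $\tau<\frac{\mu}{6ML}$, is to check that every trajectory profile handed to Proposition~\ref{stima_distanza} (and to the Lipschitz estimate of Theorem~\ref{lip}) stays Lipschitz of constant $\le 3M$.
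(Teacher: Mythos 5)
Your reduction loses the essential factor $2$, and the inequality you yourself call the heart of the matter is false, so this is not a fixable technicality but a structural flaw. You run the \emph{same} control $u_\epsilon$ from $x+2h$ for the \emph{same} time $\theta_\epsilon$ and then discard one copy of $T(x+h)\ge 0$, so that what you actually need is $T(x+2h)-T(x+h)\le Cc\lVert h\rVert_\infty^2$, equivalently $d_{\mathcal K}(\xi_2)\le c\lVert h\rVert_\infty^2$. Both fail in examples satisfying all the hypotheses: take $f(z,u)=u$ with $U$ the closed unit ball, $\mathcal K$ a closed ball (the delay is immaterial since $f$ is state-independent, and \eqref{neccond}, $(H_1)$--$(H_4)$ all hold), $x\equiv z_0\in\partial\mathcal K$ and $h\equiv a\nu$ with $\nu$ the outward unit normal at $z_0$ and $a>0$ small. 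A near-optimal control from $x+h$ points inward, $\theta_\epsilon\approx a$, $\xi\approx z_0$, and since the dynamics is state-independent $\xi_2=\xi+h(0)$, so $d_{\mathcal K}(\xi_2)\approx a=\lVert h\rVert_\infty$ and $T(x+2h)-T(x+h)\approx a$: both quantities are genuinely of first order (while the lemma's conclusion $T(x+2h)-2T(x+h)\approx 0$ of course holds). The obstacle you flag --- that $\xi_0$, and possibly the midpoint, fall into $\mathring{\mathcal K}$ --- is the symptom of this, not a bookkeeping issue: near $\partial\mathcal K$ the distance behaves like a positive part, which is convex across the boundary, so no ``one-sided restriction'' of \eqref{neccond} can validate the inequality for your triple $\bigl(\xi_0,\tfrac{\xi_0+\xi_2}{2},\xi_2\bigr)$; in the example that inequality would read $a\le 4ca^2$, false for small $a$.

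The paper's proof keeps the factor $2$ by a time-dilation device absent from your plan: with $\theta^*=\theta(x+h,u)$, $\tilde y=y(\cdot;x+h,u)$, it runs from $x+2h$ the rescaled control $\bar u(t)=u(t/2)$ up to time $2\theta^*$, so Proposition \ref{din} gives $T(x+2h)\le 2\theta^*+T(\bar y_{2\theta^*}(\cdot))\le 2(T(x+h)+\epsilon)+T(\bar y_{2\theta^*}(\cdot))$, and the endpoint $\bar y(2\theta^*)$ is shown --- by exactly the kind of Lipschitz-in-$f$, short-time ($\theta^*\le 2C\lVert h\rVert_\infty$) estimate you use for $w$, no $(H_3)$ needed --- to lie within $O(\lVert h\rVert_\infty^2)$ of the reflected point $2\tilde y(\theta^*)-x(0)$. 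Because both the reflection center $\tilde y(\theta^*)$ and $x(0)$ lie on $\partial\mathcal K$, the semiconcavity of $d_{\mathcal K}$ is applied to the triple $\bigl(2\tilde y(\theta^*)-x(0),\ \tilde y(\theta^*),\ x(0)\bigr)$, where the only point that can enter $\mathring{\mathcal K}$ is the reflected one, in which case its distance is $0$ and the bound is trivially preserved; this yields $d_{\mathcal K}(\bar y(2\theta^*))\le\bar k\lVert h\rVert_\infty^2$, and then \eqref{estimate} applied in $\mathcal R_{3M}$ (this is where $\tau<\frac{\mu}{6ML}$ enters, as you correctly anticipated) closes the argument. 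Your preliminary steps --- $T(x+h)\le C\lVert h\rVert_\infty$ via \eqref{estimate}, the second-difference bound on $w$, the $3M$-Lipschitz bookkeeping --- are sound and have counterparts in the paper, but the core of your strategy (same control, same horizon, $d_{\mathcal K}(\xi_2)=O(\lVert h\rVert_\infty^2)$) must be replaced by the doubling construction.
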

\begin{proof}
	First of all, let us note that, from \eqref{neccond} and from the compactness of $\mathcal{K}$, there exist two constants $r>0$, $c>0$ such that
	\begin{equation}\label{dk}
		d_{\mathcal{K}}(z_{1})+d_{\mathcal{K}}(z_{2})-2d_{\mathcal{K}}\left(\frac{z_{1}+z_{2}}{2}\right)\leq c\lvert z_{1}-z_{2}\rvert^{2},
	\end{equation}
	for every $z_{1},\,z_{2}$ satisfying $z_{1},\,z_{2},\,\frac{z_{1}+z_{2}}{2}\in \mathcal{K}_{r}\setminus\mathring{\mathcal{K}}.$
	Now, we pick $\rho >0$ such that \begin{equation}\label{rho}
		2(1+2MC)\rho<\min \{\delta,r\},
	\end{equation}
 where $C$ and $\delta$ are the constants  of Proposition \ref{stima_distanza} in the case $\bar M=M.$
	Let $x\in \mathcal{R}$ and $h\in C([-\tau,0];\mathbb{R}^{n})$ satisfying  the following properties: $x(0)\in \partial\mathcal{K}$, $\lVert h\rVert_{\infty}\leq\rho$, $(x+h)(0)\notin \mathcal{K}$, $x,\,x+h\in \mathcal{R}_{M}$, $x+2h\in \mathcal{R}$. 
\\Being $(x+h)(0)\notin\mathcal{K}$, we can thereby consider
	
$$T(x+h)=\inf_{u}\theta(x+h,u).$$
	By definition of $T$, { fixed} $\epsilon\in (0,C\lVert h\rVert_{\infty})$, there exists a control $u$ such that \begin{equation}\label{theta*}
		\theta^{*}:=\theta(x+h,u)<T(x+h)+\epsilon.
	\end{equation}
	Let us define

	$$\bar{u}(t)=u\left(\frac{t}{2}\right),\quad \tilde{y}(t)=y(t;x+h,u),\quad \bar{y}(t)=y(t;x+2h,\bar{u}).$$
	We claim that $\tilde{y}(t),\,\bar{y}(t)\in \mathcal{K}_{\delta}\cap\mathcal{K}_{r}$, for every $t\in [0,2\theta^{*}]$. Indeed, fix $t\in [0,2\theta^{*}]$. Then, since $x(0)\in \partial\mathcal{K}$, we can write

	$$\begin{array}{l}
		\vspace{0.3cm}\displaystyle{d_{\mathcal{K}}(\tilde{y}(t))\leq \lvert\tilde{y}(t)- x(0)\rvert=\lvert y(t;x+h,u)- x(0)\rvert}\\
		\vspace{0.3cm}\displaystyle{\hspace{1.6cm}=\left\lvert y(0;x+h,u)+\int_{0}^{t}\tilde{y}'(s)ds- x(0)\right\rvert}\\
		\vspace{0.3cm}\displaystyle{\hspace{1.6cm}=\left\lvert x(0)+h(0)+\int_{0}^{t}\tilde{y}'(s)ds- x(0)\right\rvert}\\
		\vspace{0.3cm}\displaystyle{\hspace{1.6cm}\leq \lvert h(0)\rvert+\int_{0}^{t}\lvert\tilde{y}'(s)\rvert ds }\\
		\vspace{0.3cm}\displaystyle{\hspace{1.6cm}\leq \lVert h\rVert_{\infty}+\int_{0}^{t}\lvert f(\tilde{y}(s-\tau),u)\rvert ds}\\
		\displaystyle{\hspace{1.6cm}\leq \lVert h\rVert_{\infty}+Mt\leq \lVert h\rVert_{\infty}+2M\theta^{*}.}
	\end{array}$$
	Now, using \eqref{theta*}, we get

	$$ 2M\theta^{*}< 2M(T(x+h)+\epsilon),$$
	from which, being $\epsilon\in (0,C\lVert h\rVert_{\infty})$, \begin{equation}\label{6}
		2M\theta^{*}<2M(T(x+h)+C\lVert h\rVert_{\infty}).
	\end{equation}
	In addition, we have that $(x+h)(0)\in \mathcal{K}_{\delta}\setminus\mathcal{K}$. Indeed, by using \eqref{rho} and the fact that $x(0)\in \partial\mathcal{K}$, it turns out that 

$$d_{\mathcal{K}}((x+h)(0))\leq\lvert x(0)+h(0)-x(0)\rvert\leq \lVert h\rVert_{\infty}\leq\rho <\delta.$$
	Thus, $x+h$, which is Lipschitz continuous of constant $M$, is such that $(x+h)(0)\in \mathcal{K}_{\delta}\backslash\mathcal{K}$. So, estimate \eqref{estimate} yields

	$$T(x+h)\leq Cd_{\mathcal{K}}((x+h)(0))\leq C\lVert h\rVert_{\infty}.$$
	Therefore, \eqref{6} becomes 

$$2M\theta^{*}<2M( C\lVert h\rVert_{\infty}+ C\lVert h\rVert_{\infty})=4MC\lVert h\rVert_{\infty},$$
and so
	\begin{equation}\label{7}
	\theta^{*}<2 C\lVert h\rVert_{\infty}.
\end{equation}
As a consequence, using \eqref{rho} and \eqref{7}, we can write 

	$$\begin{array}{l}
		\vspace{0.3cm}\displaystyle{d_{\mathcal{K}}(\tilde{y}(t))\leq\lVert h\rVert_{\infty}+4MC\lVert h\rVert_{\infty}}\\
		\vspace{0.3cm}\displaystyle{\hspace{1.6cm}\leq 2\lVert h\rVert_{\infty}+4MC\lVert h\rVert_{\infty}}\\
		\vspace{0.3cm}\displaystyle{\hspace{1.6cm}=2(1+2MC)\lVert h\rVert_{\infty}}\\
		\displaystyle{\hspace{1.6cm}\leq 2(1+2MC)\rho <\min\{\delta,r \},}
	\end{array}$$
	i.e. $\tilde{y}(t)\in \mathcal{K}_{\delta}\cap\mathcal{K}_{r}$.
	On the other hand, since $x(0)\in \partial\mathcal{K}$, we have that

	$$\begin{array}{l}
		\vspace{0.3cm}\displaystyle{d_{\mathcal{K}}(\bar{y}(t))\leq \lvert\bar{y}(t)- x(0)\rvert=\lvert y(t;x+2h,\bar{u})- x(0)\rvert}\\
		\vspace{0.3cm}\displaystyle{\hspace{1.6cm}=\left\lvert y(0;x+2h,\bar{u})+\int_{0}^{t}\bar{y}'(s)ds- x(0)\right\rvert}\\
		\vspace{0.3cm}\displaystyle{\hspace{1.6cm}\leq 2\lvert h(0)\rvert+\int_{0}^{t}\lvert f(\bar{y}(s-\tau),\bar{u})\rvert ds}\\
		\displaystyle{\hspace{1.6cm}\leq 2\lVert h\rVert_{\infty}+Mt\leq2\lVert h\rVert_{\infty}+2M\theta^{*}.}
	\end{array}$$
	Thus, using \eqref{7}, we get

	$$\begin{array}{l}
		\vspace{0.3cm}\displaystyle{d_{\mathcal{K}}(\bar{y}(t))\leq2\lVert h\rVert_{\infty}+4MC\lVert h\rVert_{\infty}=2(1+2MC)\lVert h\rVert_{\infty}}\\
		\displaystyle{\hspace{2cm}\leq2(1+2MC)\rho<\min\{\delta,r \},}
	\end{array}$$
	i.e. $\bar{y}(t)\in \mathcal{K}_{\delta}\cap\mathcal{K}_{r}$.
	Next, we claim that \begin{equation}\label{8}
		T(x+2h)\leq 2(T(x+h)+\epsilon)+k\lVert h\rVert_{\infty}^{2}
	\end{equation}
	for a suitable constant $k>0$, indipendent of $\epsilon,\,x,\,h$. Let us note that we can assume $2\theta^{*}<\theta(x+2h,\bar{u})$. Indeed, if this is not the case, it holds

	$$T(x+2h)\leq \theta(x+2h,\bar{u})\leq 2\theta^{*}<2(T(x+h)+\epsilon)$$
	and \eqref{8} is satisfied for any positive constant $k$. 
	\\Therefore, using the dynamic programming principle, we can write
	\begin{equation}\label{pp}
		T(x+2h)\leq 2\theta^{*}+T(\bar{y}_{2\theta^{*}}(\cdot)).
	\end{equation}
	Now, for every $t_{1}\in [0, \theta^{*}]$ and $t_{2}\in [0,2\theta^{*}]$, we compute
	$$\begin{array}{l}
		\vspace{0.3cm}\displaystyle{\lvert \tilde{y}(t_{1})-\bar{y}(t_{2})\rvert=\lvert y(t_{1};x+h,u)-y(t_{2};x+2h,\bar{u})\rvert}\\
		\vspace{0.3cm}\displaystyle{\hspace{1cm}=\left\lvert x(0)+h(0)+\int_{0}^{t_{1}}y'(s;x+h,u)ds-x(0)-2h(0)-\int_{0}^{t_{2}}y'(s;x+2h,\bar{u})ds\right\rvert}\\
		\vspace{0.3cm}\displaystyle{\hspace{1cm}\leq \lvert h(0)\rvert+\int_{0}^{t_{1}}\lvert f(\tilde{y}(s-\tau),u)\rvert ds+\int_{0}^{t_{2}}\lvert f(\bar{y}(s-\tau),\bar{u})\rvert ds}\\
		\displaystyle{\hspace{1cm}\leq \lVert h\rVert_{\infty}+Mt_{1}+Mt_{2}\leq \lVert h\rVert_{\infty}+M\theta^{*}+2M\theta^{*}=\lVert h\rVert_{\infty}+3M\theta^{*}.}
	\end{array}$$
	Therefore, since \eqref{7} implies that

 $$3M\theta^{*}<6MC\lVert h\rVert_{\infty},$$
	we can write
	\begin{equation}\label{9}
		\lvert \tilde{y}(t_{1})-\bar{y}(t_{2})\rvert\leq (1+6MC)\lVert h\rVert_{\infty}.
	\end{equation}
	Similarly, it holds

	$$\lvert \tilde{y}(\theta^{*})-x(0)\rvert\leq {\Vert h\Vert_\infty}+M\theta^{*},$$
	from which, by using again \eqref{7}, it comes that
	\begin{equation}\label{10}
		\lvert \tilde{y}(\theta^{*})-x(0)\rvert\leq (1+2MC)\lVert h\rVert_{\infty}.
	\end{equation}
	Also, for every $t\in [0,2\theta^{*}]$, we have

	$$\begin{array}{l}
		\vspace{0.3cm}\displaystyle{\lvert\bar{y}(t)-x(0)\rvert=\lvert y(t;x+2h,\bar{u})-x(0)\rvert}\\
		\vspace{0.3cm}\displaystyle{\hspace{2.3cm}=\left\lvert x(0)+2h(0)+\int_{0}^{t}\bar{y}'(s)ds-x(0)\right\rvert}\\
		\vspace{0.3cm}\displaystyle{\hspace{2.3cm}\leq 2\lVert h\rVert_{\infty}+\int_{0}^{t}\lvert f(\bar{y}(s-\tau),\bar{u})\rvert}\\
		\displaystyle{\hspace{2.3cm}\leq 2\lVert h\rVert_{\infty}+Mt\leq2\lVert h\rVert_{\infty}+2M \theta^{*}.}
	\end{array}$$
	Thus, from \eqref{7} it follows that
	\begin{equation}\label{11}
		\lvert\bar{y}(t)-x(0)\rvert\leq2\lVert h\rVert_{\infty}+4MC \lVert h\rVert_{\infty}=2(1+2MC)\lVert h\rVert_{\infty}.
	\end{equation}
	Now, let $\tilde{x}$, $\bar{x}$ be the functions in $C([-\tau,0];\mathbb{R}^{n})$ defined as follows 

$$\tilde{x}(\cdot)=\tilde{y}_{\theta^{*}}(\cdot),\quad \bar{x}(\cdot)=\bar{y}_{2\theta^{*}}(\cdot).$$
	Then,
	
$$\tilde{x}(0)=\tilde{y}_{\theta^{*}}(0)=y(\theta^{*};x+h,u)=y(\theta(x+h,u);x+h,u)\in \partial \mathcal{K}.$$
	Moreover, it holds
	
$$\begin{array}{l}
		\vspace{0.3cm}\displaystyle{\lvert \bar{x}(0)+x(0)-2\tilde{x}(0) \rvert=\lvert\bar{y}(2\theta^{*})+x(0)-2\tilde{y}(\theta^{*})\rvert}\\
		\vspace{0.3cm}\displaystyle{\hspace{1cm}=\lvert y(2\theta^{*};x+2h,\bar{u})+x(0)-2y(\theta^{*};x+h,u)\rvert}\\
		\vspace{0.3cm}\displaystyle{\hspace{1cm}=\left\lvert x(0)+2h(0)+\int_{0}^{2\theta^{*}}\bar{y}'(s)ds +x(0)-2x(0)-2h(0)-2\int_{0}^{\theta^{*}}\tilde{y}'(s)ds\right\rvert}\\
		\vspace{0.3cm}\displaystyle{\hspace{1cm}=\left\lvert2\int_{0}^{\theta^{*}}\bar{y}'(2s)ds-2\int_{0}^{\theta^{*}}\tilde{y}'(s)ds\right\rvert}\\
		\vspace{0.3cm}\displaystyle{\hspace{1cm}\leq2\int_{0}^{\theta^{*}}\lvert f(\bar{y}(2s-\tau),\bar{u}(2s))-f(\tilde{y}(s{-\tau}),u(s)) \rvert ds}\\
		\vspace{0.3cm}\displaystyle{\hspace{1cm}=2\int_{0}^{\theta^{*}}\lvert f(\bar{y}(2s-\tau),u(s))-f(\tilde{y}(s{-\tau}),u(s) \rvert ds }\\
		\vspace{0.3cm}\displaystyle{\hspace{1cm}\leq 2L\int_{0}^{\theta^{*}}\lvert\bar{y}(2s-\tau)-\tilde{y}(s-\tau)\rvert ds}\\
		\displaystyle{\hspace{1cm}=2L\int_{-\tau}^{\theta^{*}-\tau}\lvert\bar{y}(2s+\tau)-\tilde{y}(s)\rvert ds.}
	\end{array}$$
	Now, we distinguish two different situations. Assume first that $2\theta^{*}\leq \tau$. Then, also $\theta^{*}\leq \tau$ and we can write

	$$\begin{array}{l}
		\vspace{0.3cm}\displaystyle{\lvert \bar{x}(0)+x(0)-2\tilde{x}(0) \rvert\leq2L\int_{-\tau}^{\theta^{*}-\tau}\lvert x(2s+\tau)+2h(2s+\tau)-x(s)-h(s)\rvert ds}\\
		\displaystyle{\hspace{1cm}\leq 2L\int_{-\tau}^{\theta^{*}-\tau}\lvert x(2s+\tau)-x(s)\rvert ds+2L\int_{-\tau}^{\theta^{*}-\tau}\lvert 2h(2s+\tau)-h(s)\rvert ds.}
	\end{array}$$
	Thus, since $x\in \mathcal{R}_{M}$, it comes that

	$$\begin{array}{l}
		\vspace{0.3cm}\displaystyle{\lvert \bar{x}(0)+x(0)-2\tilde{x}(0) \rvert\leq 2LM\int_{-\tau}^{\theta^{*}-\tau}\lvert 2s+\tau-s\rvert ds+6L\lVert h\rVert_{\infty}\theta^{*}}\\
		\vspace{0.3cm}\displaystyle{\hspace{2cm}=2LM\int_{-\tau}^{\theta^{*}-\tau}(s+\tau) ds+6L\lVert h\rVert_{\infty}\theta^{*}}\\
		\displaystyle{\hspace{2cm}{ =LM(\theta^{*})^{2}}+6L\lVert h\rVert_{\infty}\theta^{*}.}
	\end{array}$$
	So, using \eqref{7}, we can conclude that
	\begin{equation}\label{12}
		\lvert \bar{x}(0)+x(0)-2\tilde{x}(0) \rvert\leq {4}MLC^{2}\lVert h\rVert_{\infty}^{2}+12LC\lVert h\rVert_{\infty}^{2}=4LC({MC}+3)\lVert h\rVert_{\infty}^{2}.
	\end{equation}
	On the contrary, assume that $2\theta^{*}>\tau$. In this case, we have to examine two further situations. Firstly, if $\theta^{*}\leq \tau$, using again the fact that $x\in \mathcal{R}_{M}$, we have that

	$$\begin{array}{l}
		\vspace{0.3cm}\displaystyle{\lvert \bar{x}(0)+x(0)-2\tilde{x}(0)\rvert\leq 2L\int_{-\tau}^{-\frac{\tau}{2}}\lvert \bar{y}(2s+\tau)-\tilde{y}(s)\rvert ds+2L\int_{-\frac{\tau}{2}}^{\theta^{*}-\tau}\lvert \bar{y}(2s+\tau)-\tilde{y}(s)\rvert ds}\\
		\vspace{0.3cm}\displaystyle{\hspace{1cm}=2L\int_{-\tau}^{-\frac{\tau}{2}}\lvert (x+2h)(2s+\tau)-(x+h)(s)\rvert ds+2L\int_{-\frac{\tau}{2}}^{\theta^{*}-\tau}\lvert \bar{y}(2s+\tau)-(x+h)(s)\rvert ds}\\
		\vspace{0.3cm}\displaystyle{\hspace{1cm}\leq 2L\int_{-\tau}^{-\frac{\tau}{2}}\lvert x(2s+\tau)-x(s)\rvert ds+2L\int_{-\tau}^{-\frac{\tau}{2}}\lvert 2h(2s+\tau)-h(s)\rvert ds}\\
		\vspace{0.3cm}\displaystyle{\hspace{1.3cm}+2L\int_{-\frac{\tau}{2}}^{\theta^{*}-\tau}\lvert \bar{y}(2s+\tau)-x(0)\rvert ds+2L\int_{-\frac{\tau}{2}}^{\theta^{*}-\tau}\lvert x(0)-x(s)-h(s)\rvert ds,}
\end{array}
$$
and so, 

 $$\begin{array}{l}
		\vspace{0.3cm}\displaystyle{\lvert \bar{x}(0)+x(0)-2\tilde{x}(0)\rvert}\\
		\vspace{0.3cm}\displaystyle{\hspace{1cm}\leq 2LM\int_{-\tau}^{-\frac{\tau}{2}}\lvert 2s+\tau-s\rvert ds+6L\lVert h\rVert_{\infty}\frac{\tau}{2}+2L\int_{-\frac{\tau}{2}}^{\theta^{*}-\tau}\lvert \bar{y}(2s+\tau)-x(0)\rvert ds}\\
		\vspace{0.3cm}\displaystyle{\hspace{1.3cm}+2L\int_{-\frac{\tau}{2}}^{\theta^{*}-\tau}\lvert x(0)-x(s)\rvert ds+2L\int_{-\frac{\tau}{2}}^{\theta^{*}-\tau}\lvert h(s)\rvert ds}\\
		\vspace{0.3cm}\displaystyle{\hspace{1cm}= 2LM\int_{-\tau}^{-\frac{\tau}{2}}(s+\tau)ds+6L\lVert h\rVert_{\infty}\frac{\tau}{2}+2L\int_{-\frac{\tau}{2}}^{\theta^{*}-\tau}\lvert \bar{y}(2s+\tau)-x(0)\rvert ds}\\
		\vspace{0.3cm}\displaystyle{\hspace{1.3cm}+2L\int_{-\frac{\tau}{2}}^{\theta^{*}-\tau}(-s)ds+2L\lVert h\rVert_{\infty}\left(\theta^{*}-\frac{\tau}{2}\right)}\\
		\displaystyle{\hspace{1cm}\leq LM\left(\frac{\tau}{2}\right)^{2}+3L\lVert h\rVert_{\infty}\tau +2L\int_{-\frac{\tau}{2}}^{\theta^{*}-\tau}\lvert \bar{y}(2s+\tau)-x(0)\rvert ds+L\theta^{*}\tau +2L\lVert h\rVert_{\infty}\theta^{*}.}
	\end{array}$$
	Thus, using \eqref{7} and \eqref{11} and the fact that $\frac{\tau}{2}\leq \theta^{*}$, it comes that

	$$\begin{array}{l}
		\vspace{0.3cm}\displaystyle{\lvert \bar{x}(0)+x(0)-2\tilde{x}(0)\rvert\leq 2LM\left(\frac{\tau}{2}\right)^{2}+6L\lVert h\rVert_{\infty}\frac{\tau}{2}}\\
		\vspace{0.3cm}\displaystyle{\hspace{4.1cm}+4L(1+2MC)\lVert h\rVert_{\infty}\left(\theta^{*}-\frac{\tau}{2}\right)+2L\theta^{*}\frac{\tau}{2}+2L\lVert h\rVert_{\infty}\theta^{*}}\\
		\vspace{0.3cm}\displaystyle{\hspace{2cm}\leq 2LM\left(\frac{\tau}{2}\right)^{2}+6L\lVert h\rVert_{\infty}\frac{\tau}{2}+4L(1+2MC)\lVert h\rVert_{\infty}\theta^{*}+2L\theta^{*}\frac{\tau}{2}+2L\lVert h\rVert_{\infty}\theta^{*}}\\
		\vspace{0.3cm}\displaystyle{\hspace{2cm}\leq 2LM\left(\theta^{*}\right)^{2}+6L\lVert h\rVert_{\infty}\theta^{*}+4L(1+2MC)\lVert h\rVert_{\infty}\theta^{*}+2L(\theta^{*})^{2}+2L\lVert h\rVert_{\infty}\theta^{*}}\\
		\vspace{0.3cm}\displaystyle{\hspace{2cm}=2L(1+M)(\theta^{*})^{2}+8L(1+MC)\lVert h\rVert_{\infty}\theta^{*}}\\
		\displaystyle{\hspace{2cm}\leq 8LC^{2}\lVert h\rVert_{\infty}^{2}+16LC(1+MC)\lVert h\rVert_{\infty}^{2}=8LC(C+2(1+MC))\lVert h\rVert_{\infty}^{2}.}
	\end{array}$$
	Hence, 
	\begin{equation}\label{13}
		\lvert \bar{x}(0)+x(0)-2\tilde{x}(0)\rvert\leq8LC(C+2(1+MC))\lVert h\rVert_{\infty}^{2}.
	\end{equation}
	Finally, assume that also $\theta^{*}>\tau$. Then,

	$$\begin{array}{l}
		\vspace{0.3cm}\displaystyle{\lvert \bar{x}(0)+x(0)-2\tilde{x}(0)\rvert\leq 2L\int_{-\tau}^{-\frac{\tau}{2}}\lvert \bar{y}(2s+\tau)-\tilde{y}(s)\rvert ds+2L\int_{-\frac{\tau}{2}}^{0}\lvert \bar{y}(2s+\tau)-\tilde{y}(s)\rvert ds}\\
		\vspace{0.3cm}\displaystyle{\hspace{4.2cm}+2L\int_{0}^{\theta^{*}-\tau}\lvert \bar{y}(2s+\tau)-\tilde{y}(s)\rvert ds}\\
		\vspace{0.3cm}\displaystyle{\hspace{2cm}=2L\int_{-\tau}^{-\frac{\tau}{2}}\lvert x(2s+\tau)+2h(2s+\tau)-x(s)-h(s)\rvert ds}\\
		\vspace{0.3cm}\displaystyle{\hspace{2.3cm}+ 2L\int_{-\frac{\tau}{2}}^{0}\lvert \bar{y}(2s+\tau)-x(s)-h(s)\rvert ds+2L\int_{0}^{\theta^{*}-\tau}\lvert \bar{y}(2s+\tau)-\tilde{y}(s)\rvert ds}\\
		\vspace{0.3cm}\displaystyle{\hspace{2cm}\leq2L\int_{-\tau}^{-\frac{\tau}{2}}\lvert x(2s+\tau)-x(s)\rvert ds+2L\int_{-\tau}^{-\frac{\tau}{2}}\lvert 2h(2s+\tau)-h(s)\rvert ds}\\
		\vspace{0.3cm}\displaystyle{\hspace{2.3cm}+2L\int_{-\frac{\tau}{2}}^{0}\lvert \bar{y}(2s+\tau)-x(0)\rvert ds+2L\int_{-\frac{\tau}{2}}^{0}\lvert x(0)-x(s)\rvert ds+2L \int_{-\frac{\tau}{2}}^{0}\lvert h(s)\rvert ds}\\
		\displaystyle{\hspace{2.3cm}+2L\int_{0}^{\theta^{*}-\tau}\lvert \bar{y}(2s+\tau)-\tilde{y}(s)\rvert ds.}		
	\end{array}$$
	Therefore, from \eqref{7}, \eqref{9}, \eqref{11} and from the fact that
	$x\in \mathcal{R}_{M}$, we can write

	$$\begin{array}{l}
		\vspace{0.3cm}\displaystyle{\lvert \bar{x}(0)+x(0)-2\tilde{x}(0)\rvert\leq8LC(C+2(1+MC))\lVert h\rVert_{\infty}^{2}+2L(1+6MC)\lVert h\rVert_{\infty}\theta^{*}}\\
		\vspace{0.3cm}\displaystyle{\hspace{2.5cm}\leq 8LC(C+2(1+MC))\lVert h\rVert_{\infty}^{2}+4LC(1+6MC)\lVert h\rVert_{\infty}^{2}}\\
		\displaystyle{\hspace{2.5cm}=4LC(C+2+2MC+2+12MC)\lVert h\rVert_{\infty}^{2},}
	\end{array}$$
	from which
	\begin{equation}\label{14}
		\lvert \bar{x}(0)+x(0)-2\tilde{x}(0)\rvert\leq4LC(C+4+14MC)\lVert h\rVert_{\infty}^{2}.
	\end{equation}
Combining \eqref{12}, \eqref{13} and \eqref{14}, we can conclude that 
\begin{equation}\label{15}
	\lvert \bar{x}(0)+x(0)-2\tilde{x}(0)\rvert\leq4LC(C+4+14MC)\lVert h\rVert_{\infty}^{2}.
\end{equation}
Now, we set

$$x_{1}=2\tilde{x}(0)-x(0),\quad x_{2}=x(0).$$	Then, $x_{2}\in\partial\mathcal{K}$ and also 

$$\frac{x_{1}+x_{2}}{2}=\tilde{x}(0)\in \partial\mathcal{K}.$$
Moreover, we have that $x_{1}\in \mathcal{K}_{r}$. Indeed, since $x(0)\in \partial\mathcal{K}$, \eqref{10} yields

$$\begin{array}{l}
	\vspace{0.3cm}\displaystyle{d_{\mathcal{K}}(2\tilde{x}(0)-x(0))\leq \lvert2\tilde{x}(0)-x(0)-x(0)\rvert}\\
	\vspace{0.3cm}\displaystyle{\hspace{1.5cm}=2\lvert\tilde{x}(0)-x(0)\rvert=2\lvert \tilde{y}(\theta^{*})-x(0)\rvert}\\
	\displaystyle{\hspace{1.5cm}\leq 2(1+2MC)\lVert h\rVert_{\infty}\leq 2(1+2MC)\rho <r.}
\end{array}$$
Thus, if $x_{1}\notin \mathring{\mathcal{K}}$, using \eqref{dk} and \eqref{10} we get

 $$\begin{array}{l}
	\vspace{0.3cm}\displaystyle{d_{\mathcal{K}}(2\tilde{x}(0)-x(0))=d_{\mathcal{K}}(2\tilde{x}(0)-x(0))+d_{\mathcal{K}}(x(0))-2d_{\mathcal{K}}(\tilde{x}(0))}\\
	\displaystyle{\hspace{1.5cm}\leq 4c\lvert\tilde{x}(0)-x(0)\rvert^{2}\leq 4c(1+2MC)^{2}\lVert h\rVert^{2}_{\infty}.}
\end{array}$$
This last inequality still holds whenever $x_{1}\in \mathring{\mathcal{K}}$ since, in this case, $d_{\mathcal{K}}(2\tilde{x}(0)-x(0))=0$.
Combining the above inequality with \eqref{14}, it comes that 

$$\begin{array}{l}
	\vspace{0.3cm}\displaystyle{d_{\mathcal{K}}(\bar{x}(0))\leq\lvert \bar{x}(0)-\pi(2\tilde{x}(0)-x(0))\rvert}\\
	\vspace{0.3cm}\displaystyle{\hspace{1.7cm}{\leq}\lvert \bar{x}(0)+x(0)-2\tilde{x}(0)\rvert+d_{\mathcal{K}}(2\tilde{x}(0)-x(0))}\\
	\displaystyle{\hspace{1.7cm}\leq 4LC(C+4+14MC)\lVert h\rVert_{\infty}^{2}+4c(1+2MC)^{2}\lVert h\rVert_{\infty}^{2}.}
\end{array}$$
So, setting 

$$\bar{k}:=4LC(7+16MC)+4c(1+2MC)^{2},$$
	we can write \begin{equation}\label{16}
		d_{\mathcal{K}}(\bar{x}(0))\leq\bar{k}\lVert h\rVert_{\infty}^{2}.
	\end{equation}
	Let us note that, being $x,x+h\in \mathcal{R}_{M}$, $x+2h\in \mathcal{R}_{3M}$. This implies that $\bar{x}=\bar{y}_{2\theta^{*}}(\cdot)=y_{2\theta^{*}}(\cdot;x+2h,\bar{u})\in \mathcal{R}_{3M}$. Therefore, $\bar{x}\in \mathcal{R}_{3M}$ with $\bar{x}(0)=\bar{y}(2\theta^{*})\in \mathcal{K}_{\delta}\setminus\mathcal{K}$ and \eqref{estimate} yields

	$$T(\bar{x})\leq Cd_{\mathcal{K}}(\bar{x}(0))).$$ 
	This last fact together with \eqref{theta*}, \eqref{pp} and \eqref{16} implies that

	$$\begin{array}{l}
		\vspace{0.3cm}\displaystyle{T(x+2h)\leq 2\theta^{*}+T(\bar{y}_{2\theta^{*}}(\cdot))}\\
		\vspace{0.3cm}\displaystyle{=2\theta^{*}+T(\bar{x})\leq  2\theta^{*}+Cd_{\mathcal{K}}(\bar{x}(0))}\\
		\vspace{0.3cm}\displaystyle{\leq2\theta^{*}+C\bar{k}\lVert h\rVert_{\infty}^{2}=2\theta^{*}+k\lVert h\rVert_{\infty}^{2}}\\
		\displaystyle{\leq 2(T(x+h)+\epsilon)+k\lVert h\rVert_{\infty}^{2},}
	\end{array}$$
showing that \eqref{8} is valid. Finally, letting $\epsilon\to0$ in \eqref{8}, we get	

$$T(x+2h)-2T(x+h)\leq k\lVert h\rVert_{\infty}^{2},$$
	and the proof is concluded.
\end{proof}
Now, we are able to prove Theorem \ref{semiconcavitythm}. We assume for simplicity that for any initial data $x\in \mathcal{R}$ there exists an optimal control. Indeed, if no such trajectory exists, one can use an approximation argument.
\begin{proof}[proof of Theorem \ref{semiconcavitythm}]
	Let $Q\subset\subset\mathcal{R}_{M}\setminus \mathcal{R}_{M}^{\mathcal{K}}$ be a convex set and take $x,h\in C([-\tau,0],\mathbb{R}^{n})$ such that $x,x+h,x-h\in Q$. Let $u$ be the optimal control associated with $x$, i.e. for which $\theta(x,u)=T(x)$. We set

 $$\bar{y}(t)=y(t;x,u),\quad y_{1}(t)=y(t;x-h,u), \quad y_{2}(t)=y(t;x+h,u).$$
	Then, by definition, $\bar{y}(T(x))\in \partial\mathcal{K}$.
\\Now, let $0\leq t\leq T(x)$. Then, we have that 

	$$\begin{array}{l}
		\vspace{0.3cm}\displaystyle{\lvert \bar{y}(t)-y_{1}(t)\rvert=\lvert y(t;x,u)-y(t;x-h,u)\rvert}\\
		\vspace{0.3cm}\displaystyle{\hspace{2.4cm}=\left\lvert x(0)+\int_{0}^{t}\bar{y}'(s)ds-x(0)+h(0)-\int_{0}^{t}y_{1}'(s)ds\right\rvert}\\
		\vspace{0.3cm}\displaystyle{\hspace{2.4cm}\leq \lvert h(0)\rvert+ \int_{0}^{t}\lvert f(\bar{y}(s-\tau),u)-f(y_{1}(s-\tau),u)\rvert ds}\\
		\vspace{0.3cm}\displaystyle{\hspace{2.4cm}\leq \lvert h(0)\rvert+L\int_{0}^{t}\lvert\bar{y}(s-\tau)-y_{1}(s-\tau)\rvert ds}\\
		\displaystyle{\hspace{2.4cm}=\lvert h(0)\rvert+L\int_{-\tau}^{t-\tau}\lvert\bar{y}(s)-y_{1}(s)\rvert ds.}
	\end{array}$$
	We distinguish two different situations. Assume first that $T(x)\leq \tau$. Then, $t\leq \tau$ and, for every $s\in [-\tau,t-\tau]$, it holds $s\leq 0$. Thus,	

$$\begin{array}{l}
		\vspace{0.3cm}\displaystyle{\lvert \bar{y}(t)-y_{1}(t)\rvert\leq \lvert h(0)\rvert+L\int_{-\tau}^{t-\tau}\lvert x(s)-x(s)+h(s)\rvert ds}\\
		\vspace{0.3cm}\displaystyle{\hspace{2.4cm}\leq \lVert h\rVert_{\infty}+L\lVert h\rVert_{\infty}t}\\
		\vspace{0.3cm}\displaystyle{\hspace{2.4cm}\leq\lVert h\rVert_{\infty}+L\lVert h\rVert_{\infty}T(x) }\\
		\displaystyle{\hspace{2.4cm}=(1+T(x))\lVert h\rVert_{\infty}.}
	\end{array}$$
	On the contrary, assume that $T(x)>\tau$. Then, if $t\in [0,\tau]$, for every $s\in [-\tau,t-\tau]$, we have that $s\leq 0$ and and

$$\lvert \bar{y}(t)-y_{1}(t)\rvert\leq \lvert h(0)\rvert+L\int_{-\tau}^{t-\tau}\lvert x(s)-x(s)+h(s)\rvert ds\leq \lVert h\rVert_{\infty}+L\lVert h\rVert_{\infty}t\leq (1+T(x))\lVert h\rVert_{\infty}.$$
	If $t>\tau$, it rather comes that
\begin{equation}\label{la_richiamo}	
\begin{array}{l}
		\vspace{0.3cm}\displaystyle{\lvert \bar{y}(t)-y_{1}(t)\rvert\leq\lVert h\rVert_{\infty}+L\int_{-\tau}^{0}\lvert\bar{y}(s)-y_{1}(s)\rvert ds+L\int_{0}^{t-\tau}\lvert\bar{y}(s)-y_{1}(s)\rvert ds}\\
		\vspace{0.3cm}\displaystyle{\hspace{2.4cm}=\lVert h\rVert_{\infty}+L\int_{-\tau}^{0}\lvert x(s)-x(s)+h(s)\rvert ds+L\int_{0}^{t-\tau}\lvert\bar{y}(s)-y_{1}(s)\rvert ds}\\
		\vspace{0.3cm}\displaystyle{\hspace{2.4cm}\leq \lVert h\rVert_{\infty}+L\lVert h\rVert_{\infty}\tau+L\int_{0}^{t-\tau}\lvert\bar{y}(s)-y_{1}(s)\rvert ds}\\
		\displaystyle{\hspace{2.4cm}\leq(1+LT(x))\lVert h\rVert_{\infty}+L\int_{0}^{ t}\lvert\bar{y}(s)-y_{1}(s)\rvert ds.}
	\end{array}
\end{equation}
	{Then, \eqref{la_richiamo} holds, as long as $\bar y$ and $y_1$ remain outside the target, for $t\in [0, T(x)].$} Thus, from Gronwall's inequality, we get

	$$\lvert \bar{y}(t)-y_{1}(t)\rvert\leq(1+LT(x))\lVert h\rVert_{\infty}
e^{Lt}\leq (1+LT(x))e^{LT(x)}\lVert h\rVert_{\infty}, \quad {\ t\le T(x).}$$
	We have so proved that, in both cases, \begin{equation}\label{1t}
		\lvert \bar{y}(t)-y_{1}(t)\rvert\leq k_{1}\lVert h\rVert_{\infty}, \quad  t\leq T(x),
	\end{equation}
	for some positive constant $k_{1}$, depending only on $Q$.
	Arguing in the same way, it also holds that
	\begin{equation}\label{2t}
		\lvert \bar{y}(t)-y_{2}(t)\rvert\leq k_{1}\lVert h\rVert_{\infty}, \quad t\leq T(x).
	\end{equation}
	{Now, arguing as in the proof of Lemma 7.1.2 in \cite{Cannarsa_libro}, from assumption $(H_{3})$ there exists a constant $k_{2}>0$, depending only on $Q$, such that}
	\begin{equation}\label{3t}
		{\left\lvert f(y_{1}(t),u)+f(y_{2}(t),u)-2f\left(\frac{y_{1}(t)+y_{2}(t)}{2},u\right)\right\rvert\leq k_{2}\lvert y_{1}(t)-y_{2}(t)\rvert^{2},}
	\end{equation}
{for every $t\in [0,T(x)]$.} Therefore, for $t\in [0,T(x)]$, we have that 

	$$\begin{array}{l}
		\vspace{0.3cm}\displaystyle{\lvert y_{1}(t)+y_{2}(t)-2\bar{y}(t)\rvert=\lvert y(t;x-h,u)+y(t;x+h,u)-2y(t;x,u)\rvert}\\
		\vspace{0.3cm}\displaystyle{\hspace{1.7cm}=\left\lvert x(0)-h(0)+\int_{0}^{t}y_{1}'(s)ds +x(0)+h(0)+\int_{0}^{t}y_{2}'(s)ds-2x(0)-2\int_{0}^{t}\bar{y}'(s) ds\right\rvert}\\
		\vspace{0.3cm}\displaystyle{\hspace{1.7cm}\leq \int_{0}^{t}\lvert y_{1}'(s)+y_{2}'(s)-2\bar{y}'(s) \rvert ds}\\
		\vspace{0.3cm}\displaystyle{\hspace{1.7cm}= \int_{0}^{t}\lvert f(y_{1}(s-\tau),u)+f(y_{2}(s-\tau),u)-2f(\bar{y}(s-\tau),u)\rvert ds}\\
		\displaystyle{\hspace{1.7cm}=\int_{-\tau}^{t-\tau}\lvert f(y_{1}(s),u)+f(y_{2}(s),u)-2f(\bar{y}(s),u)\rvert ds.}		
	\end{array}$$
Thus, $(H_{1})$ and \eqref{3t} imply that 

	$$\begin{array}{l}
		\vspace{0.3cm}\displaystyle{\lvert y_{1}(t)+y_{2}(t)-2\bar{y}(t)\rvert\leq \int_{-\tau}^{t-\tau}	\left\lvert f(y_{1}(s),u)+f(y_{2}(s),u)-2f\left(\frac{y_{1}(s)+y_{2}(s)}{2},u\right)\right\rvert ds}\\
		\vspace{0.3cm}\displaystyle{\hspace{4.3cm}+2\int_{-\tau}^{t-\tau}\left\lvert f\left(\frac{y_{1}(s)+y_{2}(s)}{2},u\right)-f(\bar{y}(s),u)\right\rvert ds.}\\
		\vspace{0.3cm}\displaystyle{\hspace{3.5cm}\leq k_{2}\int_{-\tau}^{t-\tau}\lvert y_{1}(t)-y_{2}(t)\rvert^{2}ds+2L\int_{-\tau}^{t-\tau}\left\lvert \frac{y_{1}(s)+y_{2}(s)}{2}-\bar{y}(s)\right\rvert ds}\\
		\displaystyle{\hspace{3.5cm}=k_{2}\int_{-\tau}^{t-\tau}\lvert y_{1}(s)-y_{2}(s)\rvert^{2}ds+L\int_{-\tau}^{t-\tau}\left\lvert y_{1}(s)+y_{2}(s)-2\bar{y}(s)\right\rvert ds.}
	\end{array}$$
	Now, we distinguish two situations. Assume first that if $t\leq\tau$. Then,
	
$$\begin{array}{l}
		\vspace{0.3cm}\displaystyle{\lvert y_{1}(t)+y_{2}(t)-2\bar{y}(t)\rvert\leq k_{2}\int_{-\tau}^{t-\tau}\lvert x(s)-h(s)-x(s)-h(s)\rvert^{2}ds}\\
		\vspace{0.3cm}\displaystyle{\hspace{4.4cm}+L\int_{-\tau}^{t-\tau}\left\lvert x(s)-h(s)+x(s)+h(s)-2x(s)\right\rvert ds}\\
		\displaystyle{\hspace{4cm}=4k_{2}\int_{-\tau}^{t-\tau}\lvert h(s)\rvert^{2}\leq 4k_{2}\lVert h\rVert_{\infty}^{2}t,}
	\end{array}$$
	and 
	\begin{equation}\label{17}
		\lvert y_{1}(t)+y_{2}(t)-2\bar{y}(t)\rvert\leq 4k_{2}T(x)\lVert h\rVert_{\infty}^{2}.
	\end{equation}
	On the other hand, assume that $t>\tau$. Then, in this case,

	$$\begin{array}{l}
		\vspace{0.3cm}\displaystyle{\lvert y_{1}(t)+y_{2}(t)-2\bar{y}(t)\rvert\leq { k_2} \int_{-\tau}^{0}\lvert y_{1}(s)-y_{2}(s)\rvert^{2}ds+L\int_{-\tau}^{0}\left\lvert y_{1}(s)+y_{2}(s)-2\bar{y}(s)\right\rvert ds}\\
		\vspace{0.3cm}\displaystyle{\hspace{2cm}+\int_{0}^{t-\tau}\lvert y_{1}(s)-y_{2}(s)\rvert^{2}ds+L\int_{0}^{t-\tau}\left\lvert y_{1}(s)+y_{2}(s)-2\bar{y}(s)\right\rvert ds}\\
		\vspace{0.3cm}\displaystyle{\hspace{1cm}=4k_{2}\int_{-\tau}^{0}\lvert h(s)\rvert^{2} ds+L\int_{-\tau}^{0}\lvert x(s)-h(s)+x(s)+h(s)-2x(s)\rvert ds}\\
		\vspace{0.3cm}\displaystyle{\hspace{2cm}+\int_{0}^{t-\tau}\lvert y_{1}(s)-y_{2}(s)\rvert^{2}ds+L\int_{0}^{t-\tau}\left\lvert y_{1}(s)+y_{2}(s)-2\bar{y}(s)\right\rvert ds}\\
		\displaystyle{\hspace{1cm}\leq 4k_{2}\lVert h\rVert_{\infty}^{2}\tau+\int_{0}^{t-\tau}\lvert y_{1}(s)-y_{2}(s)\rvert^{2}ds+L\int_{0}^{t-\tau}\left\lvert y_{1}(s)+y_{2}(s)-2\bar{y}(s)\right\rvert ds.}
	\end{array}$$
	Now, using \eqref{1t} and \eqref{2t}, for every $s\in [0,t-\tau]$, 

	$$\lvert y_{1}(s)-y_{2}(s)\rvert\leq \lvert y_{1}(s)-\bar{y}(s)\rvert+\lvert \bar{y}(s)-y_{2}(s)\rvert\leq 2k_{1}\lVert h\rVert_{\infty},$$
	from which 

$$\lvert y_{1}(s)-y_{2}(s)\rvert^{2}\leq 4k_{1}^{2}\lVert h\rVert_{\infty}^{2}.$$
	As a consequence, we get 
\begin{equation}\label{da_richiamare}	
\begin{array}{l}
		\vspace{0.3cm}\displaystyle{\lvert y_{1}(t)+y_{2}(t)-2\bar{y}(t)\rvert\leq 4k_{2}\lVert h\rVert_{\infty}^{2}\tau+4k_{1}^{2}\lVert h\rVert_{\infty}^{2}(t-\tau)}\\
		\vspace{0.3cm}\displaystyle{\hspace{4.5cm}+L\int_{0}^{t-\tau}\left\lvert y_{1}(s)+y_{2}(s)-2\bar{y}(s)\right\rvert ds}\\
		\vspace{0.3cm}\displaystyle{\hspace{4cm}\leq 4(k_{2}+k_{1}^{2})T(x)\lVert h\rVert_{\infty}^{2}+L\int_{0}^{t-\tau}\left\lvert y_{1}(s)+y_{2}(s)-2\bar{y}(s)\right\rvert ds}\\
		\displaystyle{\hspace{4cm}\leq 4(k_{2}+k_{1}^{2})T(x)\lVert h\rVert_{\infty}^{2}+L\int_{0}^{t}\left\lvert y_{1}(s)+y_{2}(s)-2\bar{y}(s)\right\rvert ds.}
	\end{array}
\end{equation}
Then, \eqref{da_richiamare} holds for any $t\le T(x).$
	Thus, Gronwall's inequality yields

	$$\lvert y_{1}(t)+y_{2}(t)-2\bar{y}(t)\rvert\leq e^{LT(x)}4(k_{2}+k_{1}^{2})T(x)\lVert h\rVert_{\infty}^{2}, \quad t\le T(x).$$
	This last fact together with \eqref{17} implies that 
	\begin{equation}\label{disug}
		\lvert y_{1}(t)+y_{2}(t)-2\bar{y}(t)\rvert\leq k_{3}\lVert h \rVert_{\infty}^{2} ,\quad  t\leq T(x),
	\end{equation}
	for a positive constant $k_{3}$, dependent only of $Q$. Now, we can consider three cases.

(Case I) Suppose that one of the two paths $y_{1}$ or $y_{2}$, namely $y_{1}$, reaches $\mathcal{K}$ at a time $t^{*}<T(x)$, i.e. $t^{*}=\theta(x-h,u)<T(x)$. We define the functions $x_{1}$, $x_{2}$, $\bar{x}$ in $C([-\tau,0];\mathbb{R}^{n})$ as follows
	
$$x_{1}(\cdot)=y_{1_{t^{*}}}(\cdot),\quad x_{2}(\cdot)=y_{2_{t^{*}}}(\cdot),\quad \bar{x}(\cdot)=\bar{y}_{t^{*}}(\cdot).$$
	We can immediately notice that $x_{1}(0)=y_{1}(t^{*})\in \partial\mathcal{K}$, being $\theta(x-h,u)=t^{*}$. Also, by definition of $T$,
	\begin{equation}\label{dpp1}
		T(x-h)\leq t^{*}.
	\end{equation}
	Moreover, using the dynamic programming principle (Proposition \ref{din}) and since $u$ is an optimal control for $x$, we can write
	\begin{equation}\label{dpp2}
		T(x+h)\leq t^{*}+T(x_{2}),\quad T(x)=t^{*}+T(\bar{x}).
	\end{equation}
	Now, we want to apply Lemma \ref{auxlem}. First of all, we have that $x_{1}(0)\in \partial\mathcal{K}$ and that
$x_{1}=y_{1_{t^{*}}}(\cdot)=y_{t^{*}}(\cdot;x-h,u)\in \mathcal{R}_{M}$. In addition, $\bar{x}-x_{1}\in C([-\tau,0];\mathbb{R}^{n})$ satisfies: $(x_{1}+\bar{x}-x_{1})(0)=\bar{x}(0)=\bar{y}_{t^{*}}(0)=y(t^{*};x,u)\notin\mathcal{K}$, since from our assumptions $t^{*}<T(x)$; $x_{1}+\bar{x}-x_{1}=\bar{x}=\bar{y}_{t^{*}}(\cdot)=y_{t^{*}}(\cdot;x,u)\in \mathcal{R}_{M}.$ 
Therefore, from Lemma \ref{auxlem}, for $\lVert \bar{x}-x_{1}\rVert_{\infty}$ small it comes that 
	\begin{equation}\label{4t}
		T(2\bar{x}-x_{1})-2T(\bar{x})\leq k_{4}\lVert \bar{x}-x_{1}\rVert_{\infty}^{2},
	\end{equation}
for a suitable positive constant $k_{4}$. Moreover, for every $s\in [-\tau,0]$, we have
	
$$\lvert \bar{x}(s)-x_{1}(s)\rvert=\lvert\bar{y}(t^{*}+s)-y_{1}(t^{*}+s)\rvert.$$
Thus, if $t^{*}+s>0$, from \eqref{1t} with $t^{*}+s\in [0,T(x)]  $ we get

	$$\lvert \bar{x}(s)-x_{1}(s)\rvert\leq k_{1}\lVert h\rVert_{\infty}.$$
On the other hand, if $t^{*}+s\leq0$, it rather holds 

$$\begin{array}{l}
	\vspace{0.3cm}\displaystyle{\hspace{0.5cm}\lvert \bar{x}(s)-x_{1}(s)\rvert=\lvert y(t^{*}+s;x,u)-y(t^{*}+s;x-h,u)\rvert}\\
	\displaystyle{=\lvert x(t^{*}+s)-x(t^{*}+s)+h(t^{*}+s)\rvert=\lvert h(t^{*}+s)\rvert\leq \lVert h\rVert_{\infty}}.
\end{array}$$
\\So, setting $k_{5}:=\max\{k_{1},1\}$, it comes that

$$\lvert \bar{x}(s)-x_{1}(s)\rvert\leq k_{5}\lVert h\rVert_{\infty}, $$
for all $s\in [-\tau,0]$. Therefore,

	$$\lVert\bar{x}-x_{1}\rVert_{\infty}\leq k_{5}\lVert h\rVert_{\infty}. $$
and \eqref{4t} becomes 
\begin{equation}\label{5t}
		T(2\bar{x}-x_{1})-2T(\bar{x})\leq k_{4}k_{5}^{2}\lVert h\rVert_{\infty}^{2}.
	\end{equation}
Furthermore, since $T$ is locally Lipschitz continuous from Theorem \ref{lip}, we have that
	\begin{equation}\label{6t}
		\lvert T(2\bar{x}-x_{1})-T(x_{2})\rvert\leq k_{6} \lVert 2\bar{x}-x_{1}-x_{2}\rVert_{\infty},
	\end{equation}
for some positive constant $k_{6}$.	
Let us note that, for every $s\in [-\tau,0]$, 

$$\lvert2\bar{x}(s)-x_{1}(s)-x_{2}(s)\rvert=\lvert2\bar{y}(t^{*}+s)-y_{1}(t^{*}+s)-y_{2}(t^{*}+s)\rvert.$$
Then, if $t^{*}+s>0$, from \eqref{disug} it follows that 

$$\lvert2\bar{x}(s)-x_{1}(s)-x_{2}(s)\rvert\leq k_{3}\lVert h\rVert_{\infty}^{2}.$$
On the contrary, if $t^{*}+s\leq0$, it turns out that

$$\begin{array}{l}
	\vspace{0.3cm}\displaystyle{\lvert2\bar{x}(s)-x_{1}(s)-x_{2}(s)\rvert=\lvert2y(t^{*}+s;x,u)-y(t^{*}+s;x-h,u)-y(t^{*}+s;x+h,u)\rvert}\\
	\displaystyle{\hspace{0.7cm}=\lvert 2x(t^{*}+s)-x(t^{*}+s)+h(t^{*}+s)-x(t^{*}+s)-h(t^{*}+s)\rvert=0\leq k_{3}\lVert h\rVert_{\infty}^{2}.}
\end{array}$$
Thus, we can write

	$$\lvert2\bar{x}(s)-x_{1}(s)-x_{2}(s)\rvert\leq  k_{3}\lVert h\rVert_{\infty}^{2},$$ 
for every $s\in [-\tau,0]$. Hence, 

$$\lVert 2\bar{x}-x_{1}-x_{2}\rVert_{\infty}\leq k_{3}\lVert h\rVert_{\infty}^{2},$$
from which \eqref{6t} becomes \begin{equation}\label{7t}
		\lvert T(2\bar{x}-x_{1})-2T(\bar{x})\rvert\leq k_{6}k_{3}\lVert h\rVert_{\infty}^{2}.
\end{equation}
As a consequece, using \eqref{dpp1}, \eqref{dpp2}, \eqref{5t} and \eqref{7t}, we finally get

	$$\begin{array}{l}
		\vspace{0.3cm}\displaystyle{T(x+h)+T(x-h)-2T(x)\leq t^{*}+T(x_{2})+t^{*}-2t^{*}-2T(\bar{x})}\\
		\vspace{0.3cm}\displaystyle{\hspace{2cm}=T(x_{2})-T(2\bar{x}-x_{1})+T(2\bar{x}-x_{1})-2T(\bar{x})}\\
		\displaystyle{\hspace{2cm}\leq (k_{4}k_{5}^{2}+k_{6}k_{3})\lVert h\rVert_{\infty}^{2}=k_{7}\lVert h\rVert_{\infty}^{2}.}
	\end{array}$$

(Case II) Suppose that neither $y_{1}$ nor $y_{2}$ reach the target before $\bar{y}$ and that \begin{equation}\label{app}
		\frac{y_{1}(T(x))+y_{2}(T(x))}{2}\in\mathcal{K}.
	\end{equation}
Then, there exists $t^{*}\leq T(x)$ such that 

$$\frac{y_{1}(t^{*})+y_{2}(t^{*})}{2}\in\partial\mathcal{K}.$$
Let us note that, being $t^{*}\leq T(x)$,  $y_{1}(t^{*}),\,y_{2}(t^{*})\notin \mathcal{K}$. Thus, from \eqref{neccond} there exists a positive constant $k_{8}$ such that \begin{equation}\label{8t}
		d_{\mathcal{K}}(y_{1}(t^{*}))+d_{\mathcal{K}}(y_{2}(t^{*}))-2d_{\mathcal{K}}\left(\frac{y_{1}(t^{*})+y_{2}(t^{*})}{2}\right)\leq k_{8}\lvert y_{2}(t^{*})-y_{1}(t^{*})\rvert^{2}.
	\end{equation}
Now, using \eqref{1t} and \eqref{2t}, it holds that

	$$\lvert y_{2}(t^{*})-y_{1}(t^{*})\rvert\leq \lvert y_{2}(t^{*})-\bar{y}(t^{*})\rvert+\lvert \bar{y}(t^{*})-y_{1}(t^{*})\rvert\leq 2k_{1}\lVert h\rVert_{\infty},$$
from which 

$$\lvert y_{2}(t^{*})-y_{1}(t^{*})\rvert^{2}\leq 4k_{1}^{2}\lVert h\rVert_{\infty}^{2}.$$
Therefore, combining this last fact with \eqref{app} and \eqref{8t}, it follows that
	\begin{equation}\label{dk1}
		\begin{split}
			d_{\mathcal{K}}(y_{1}(t^{*}))+d_{\mathcal{K}}(y_{2}(t^{*}))&=d_{\mathcal{K}}(y_{1}(t^{*}))+d_{\mathcal{K}}(y_{2}(t^{*}))-2d_{\mathcal{K}}\left(\frac{y_{1}(t^{*})+y_{2}(t^{*})}{2}\right)\\&\leq 4k_{8}k_{1}^{2}\lVert h\rVert_{\infty}^{2}.
		\end{split}
	\end{equation}
	Also, the dynamic programming principle (Proposition \ref{din}) yields \begin{equation}\label{dpp3}
		T(x-h)\leq t^{*}+T(x_{1}),\quad T(x+h)\leq t^{*}+T(x_{2}),
	\end{equation}
	where $x_{1},x_{2}$ are the functions in $C([-\tau,0];\mathbb{R}^{n})$ given by

	$$x_{1}(\cdot)=y_{1_{t^{*}}}(\cdot),\quad x_{2}(\cdot)=y_{2_{t^{*}}}(\cdot).$$
	Now, from \eqref{estimate} it comes that \begin{equation}\label{9t}
		T(x_{1})\leq Cd_{\mathcal{K}}(x_{1}(0)),\quad T(x_{2})\leq Cd_{\mathcal{K}}(x_{2}(0)).
	\end{equation}
	As a consequence, putting together \eqref{dk1}, \eqref{dpp3} and \eqref{9t}, we can conclude that

	$$\begin{array}{l}
		\vspace{0.3cm}\displaystyle{T(x+h)+T(x-h)-2T(x)\leq t^{*}+T(x_{1})+t^{*}+T(x_{2})-2T(x)}\\
		\vspace{0.3cm}\displaystyle{\hspace{5.5cm}\leq 2T(x)+T(x_{1})+T(x_{2})-2T(x)}\\
		\vspace{0.3cm}\displaystyle{\hspace{5.5cm}\leq C(d_{\mathcal{K}}(x_{1}(0))+d_{\mathcal{K}}(x_{2}(0)))}\\
		\vspace{0.3cm}\displaystyle{\hspace{5.5cm}=d_{\mathcal{K}}(y_{1}(t^{*}))+d_{\mathcal{K}}(y_{2}(t^{*}))}\\
		\vspace{0.3cm}\displaystyle{\hspace{5.5cm}\leq 4k_{8}k_{1}^{2}\lVert h\rVert_{\infty}^{2}}\\
		\displaystyle{\hspace{5.5cm}=k_{9}\lVert h\rVert_{\infty}^{2}.}
	\end{array}$$

(Case III) Suppose that neither $y_{1}$ nor $y_{2}$ reach the target before $\bar{y}$ and that

 $$\frac{y_{1}(T(x))+y_{2}(T(x))}{2}\notin\mathcal{K}.$$ 
	Let $x_{1}$, $x_{2}$, $\bar{x}$ be the functions in $C([-\tau,0];\mathbb{R}^{n})$ defined as follows

	$$x_{1}(\cdot)=y_{1_{T(x)}}(\cdot),\quad x_{2}(\cdot)=y_{2_{T(x)}}(\cdot),\quad\bar{x}(\cdot)=\bar{y}_{T(x)}(\cdot).$$
	Then, $x_{1}(0),\,x_{2}(0), \frac{x_{1}(0)+x_{2}(0)}{2}\notin\mathcal{K}$. Moreover, from \eqref{neccond}, \eqref{1t} and \eqref{2t}, we can write

	$$\begin{array}{l}
		\vspace{0.3cm}\displaystyle{d_{\mathcal{K}}(x_{1}(0))+d_{\mathcal{K}}(x_{2}(0))-2d_{\mathcal{K}}\left(\frac{x_{1}(0)+x_{2}(0)}{2}\right)\leq k_{9}\lvert x_{1}(0)-x_{2}(0)\rvert^{2}}\\
		\displaystyle{\hspace{2cm}=k_{8}\lvert y_{1}(T(x))-y_{2}(T(x))\rvert^{2}\leq 4k_{8}k_{1}^{2}\lVert h\rVert_{\infty}^{2}.}
	\end{array}$$
	Therefore, since $\bar{x}(0)=\bar{y}(T(x))=y(T(x);x,u)\in\mathcal{K}$, using \eqref{disug} we get 

	$$\begin{array}{l}
		\vspace{0.3cm}\displaystyle{d_{\mathcal{K}}(x_{1}(0))+d_{\mathcal{K}}(x_{2}(0))}\\
		\vspace{0.3cm}\displaystyle{\hspace{1.5cm}=d_{\mathcal{K}}(x_{1}(0))+d_{\mathcal{K}}(x_{2}(0))-2d_{\mathcal{K}}\left(\frac{x_{1}(0)+x_{2}(0)}{2}\right)+2d_{\mathcal{K}}\left(\frac{x_{1}(0)+x_{2}(0)}{2}\right)}\\
		\vspace{0.3cm}\displaystyle{\hspace{1.5cm}\leq 4k_{8}k_{1}^{2}\lVert h\rVert_{\infty}^{2}+2\left\lvert \frac{x_{1}(0)+x_{2}(0)}{2}-\bar{x}(0)\right\rvert}\\
		\vspace{0.3cm}\displaystyle{\hspace{1.5cm}=4k_{8}k_{1}^{2}\lVert h\rVert_{\infty}^{2}+\lvert x_{1}(0)+x_{2}(0)-2\bar{x}(0)\rvert}\\
		\vspace{0.3cm}\displaystyle{\hspace{1.5cm}=4k_{8}k_{1}^{2}\lVert h\rVert_{\infty}^{2}+\lvert y_{1}(T(x))+y_{2}(T(x))-2\bar{y}(T(x))\rvert}\\
		\displaystyle{\hspace{1.5cm}\leq 4k_{8}k_{1}^{2}\lVert h\rVert_{\infty}^{2}+k_{3}\lVert h\rVert_{\infty}^{2}=(4k_{8}k_{1}^{2}+k_{3})\lVert h\rVert_{\infty}^{2}.}
	\end{array}$$
	Finally, from Proposition \ref{din} and from \eqref{estimate},

	$$\begin{array}{l}
		\vspace{0.3cm}\displaystyle{T(x+h)+T(x-h)-2T(x)\leq T(x)+T(x_{1})+T(x)+T(x_{2})-2T(x)}\\
		\vspace{0.3cm}\displaystyle{\hspace{5.0cm}\leq C(d_{\mathcal{K}}(x_{1}(0))+d_{\mathcal{K}}(x_{2}(0)))}\\
		\displaystyle{\hspace{5.0cm}\leq C(4k_{8}k_{1}^{2}+k_{3})\lVert h\rVert_{\infty}^{2}=k_{10}\lVert h\rVert_{\infty}^{2},}
	\end{array}$$
	which concludes our proof.
\end{proof}
\section{Conclusions}
In this paper, we investigated the regularity properties of the value function associated to a minimum time problem for a delayed control system.
 
Time delays in differential equations are very useful for the description of several phenomena. There are situations in which the dynamics does not depend on the present state of the trajectory but is rather influenced by what has happened in some previous instants. For instance, in stock exchange, one might have to look at the past market trend in order to make the right investment. Other situations in which it is more convenient to look at the past history of the dynamics can be found in epidemiological, biological, social, and economic models.

First, we established  a Lipschitz continuity result and then, using such a property,  a semiconcavity result. 

Semiconcavity in the classical undelayed case has many applications in optimal control theory.  It can be useful to derive optimality conditions (see e.g. \cite{CFS, CPS}). The analysis of the regularity properties of the minimum time function is also important since it is connected with the study of related Hamilton-Jacobi-Bellman equations. 

The Hamilton-Jacobi theory in infinite dimension has been developed by \cite{Crandall1, Crandall2, Crandall3}. Also, for optimal control problems involving time delays and with finite horizon, the Hamilton-Jacobi-Bellman equations have been investigated by \cite{Barron, Boccia, Bonalli, Luk1, Luk2, Luk3, Plaksin1, Plaksin2, Soner, Vinter, Zhu}. It is, then, interesting to discuss the Hamilton-Jacobi theory related to the minimum time problem for control systems presenting time delays in the state space and to deepen the application of the semiconcavity also in the delayed setting. We leave this for further research.

\bigskip
\noindent {\bf Acknowledgements.} The authors are members of  {\it Gruppo Nazionale per l'Analisi Ma\-te\-matica, la Probabilit\`a e le loro Applicazioni (GNAMPA)} of the Istituto Nazionale di Alta Matematica (INdAM). They are also  members of {\it UMI ``CliMath"}.
They  are partially supported by PRIN 2022  (2022238YY5) {\it Optimal control problems: analysis,
approximation and applications} and by INdAM GNAMPA Project {\it ``Modelli alle derivate parziali per interazioni multiagente non 
simmetriche"}(CUP E53C23001670001). 
C. Pignotti is also partially supported by
PRIN-PNRR 2022 (P20225SP98) {\it Some mathematical approaches to climate change and its impacts}.

\end{document}